\newtheorem{theorem}{Theorem}
\newtheorem{proposition}{Proposition}
\newtheorem{lemma}{Lemma}
\newtheorem{corollary}{Corollary}
\newtheorem*{assumption}{Assumption}
\theoremstyle{definition}
\theoremstyle{remark}
\newtheorem{remark}{Remark}
\numberwithin{equation}{section}
\newcommand{\RR}{\mathbb{R}}
\newcommand{\CC}{\mathbb{C}}
\newcommand{\C}{\mathbb{C}}
\newcommand{\ba}{\begin{array}}
\newcommand{\be}{\begin{equation}}
\newcommand{\ea}{\end{array}}
\newcommand{\ee}[1]{\label{#1}\end{equation}}
\newcommand{\codim}{{\rm {codim}}\,}
\newcommand{\Cal}[1]{{\mathcal {#1}}}
\newcommand{\ve}{{\mathcal {V}}}
\def\Id {{\rm \mbox{Id}}}
\def\codim {{\rm \mbox{codim\,}}}
\def\Ka {K\"ahler }
\begin{document}

\author{Sergio Console}
\address{Dipartimento di Matematica
Universit\`a di Torino,
via Carlo Alberto 10,
10123 Torino, Italy}
\email{sergio.console@unito.it}
\author{Antonio J. Di Scala}
\address{Dipartimento di Matematica, Politecnico di Torino
Corso Duca degli Abruzzi 24, 10129  Torino, Italy}
\email{antonio.discala@polito.it}
\thanks{First and second author were partially
supported by GNSAGA of INdAM, MIUR of Italy}
\author{Carlos Olmos}
\address{FaMAF, Universidad Nacional de C\'ordoba,
Ciudad Universitaria,
5000 C\'ordoba, Argentina}
\email{olmos@mate.uncor.edu}
\thanks{The third author was supported by Universidad Nacional de C\'ordoba and CONICET,
partially supported by   Antorchas, ANCyT, Secyt-UNC and CIEM}

\subjclass[2000]{Primary 53C30; Secondary 53C21}


\keywords{Submanifolds, holonomy, normal connection, orbit of isotropy representation, Hermitian symmetric space}

\title[A Berger type normal holonomy theorem]{A Berger type normal holonomy theorem for complex submanifolds}


\begin{abstract}
We prove a Berger type theorem for the normal holonomy $\Phi^\perp$ (i.e., the holonomy group of the normal connection) of  a full complete complex submanifold $M$  of the complex projective space $\C P^n$. Namely, if $\Phi^\perp$ does not act transitively, then $M$ is the complex orbit, in the complex projective space, of the isotropy
representation of an irreducible Hermitian symmetric space of rank greater or equal to $3$.
Moreover, we show that for complete irreducible complex submanifolds of $\CC^n$ the normal holonomy is generic, i.e.,  it  acts transitively on the unit sphere of the normal space. \newline
The methods in the proofs rely heavily on the singular data of appropriate
holonomy tubes (after lifting the submanifold to the complex Euclidean
space, in the $\C P^n$ case) and basic facts of complex submanifolds.
\end{abstract}

\maketitle

Berger's Holonomy Theorem \cite{B} is probably the most important general (local) result of Riemannian geometry: the restricted holonomy group of an irreducible Riemannian manifold acts transitively on the unit sphere of the tangent space except in the case that the manifold is a symmetric space of rank bigger or equal to two.

\smallskip

In submanifold geometry a prominent r\^ole is played by the holonomy group of the natural connection of the normal bundle, the so-called \textit{normal holonomy group}.

\smallskip

For submanifolds of $\RR^n$ or more generally of spaces of constant curvature,
a fundamental result is the \textit{Normal Holonomy Theorem} \cite{O1}. It asserts roughly that the non-trivial component of the action of the normal holonomy group on any normal space is the isotropy representation of a Riemannian symmetric space (called $s$-representation for short). The Normal Holonomy Theorem is a very important tool for the study of submanifold geometry, especially in the context of  submanifolds with ``simple extrinsic geometric invariants'', like isoparametric and homogeneous submanifolds (see \cite{BCO} for an introduction to this subject). In particular, in this extrinsic setting, some distinguished class of homogeneous submanifolds, the orbits of $s$-representations, play a similar r\^ole as symmetric spaces in intrinsic Riemannian geometry. Typically, requiring that a submanifold has ``simple extrinsic geometric invariants'' (e.g. ``enough'' parallel normal fields with respect to which the shape operator has constant eigenvalues) implies that the submanifold belongs to this class. Therefore, these methods based on the study of normal holonomy allowed to prove many results for  submanifolds with ``simple extrinsic geometric invariants'' \cite{BCO, CDO, CO, O2, O3, O4, PT, T, T2, Th}. But, actually, they turned out to be useful in (intrinsic) Riemannian geometry, as basic tools for a geometric proof of Berger's Theorem \cite{O5}.

\smallskip

In \cite{AD} the normal holonomy group   of  complex submanifolds of a complex space form was studied. It was proven that if the normal holonomy group acts irreducibly on the normal space then it is linearly isomorphic to the holonomy group of an irreducible Hermitian symmetric space. Moreover the normal holonomy group acts irreducibly if the submanifold is full (that is, it is not contained in a totally geodesic proper complex submanifold) and the second fundamental form at some point has no nullity.

\medskip

In the present paper, we prove a Berger type theorem for complex submanifolds of the complex projective space $\C P^n$.

\begin{theorem}\label{berger-cp}
Let $M$ be a  full and complete complex
projective submanifold of $\C P^n$. Then the following are equivalent:
\begin{enumerate}
\item The normal holonomy is not transitive on the unit sphere of the normal space (i.e., different from $U(k)$, $k=\codim(M)$, since it is
an s-representation).
\item $M$ is the complex orbit, in the complex projective space, of the isotropy
representation of an irreducible Hermitian symmetric space of rank greater or equal to $3$.
\end{enumerate}
\end{theorem}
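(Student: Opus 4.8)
The plan is to establish the two implications separately, the content lying almost entirely in $(1)\Rightarrow(2)$. A remark used throughout is that the normal connection of a complex submanifold is complex, so $\Phi^\perp$ commutes with the complex structure $J$ of the normal bundle and therefore sits inside $U(k)$; since by the Normal Holonomy Theorem \cite{O1} its non-trivial part is an $s$-representation, transitivity on the unit normal sphere is equivalent to $\Phi^\perp=U(k)$, exactly as recorded in the statement. For $(2)\Rightarrow(1)$ I would start from $M$ being the complex orbit of the isotropy representation of an irreducible Hermitian symmetric space of rank $r$ and compute $\Phi^\perp$ from the extrinsic geometry of the orbit, which is homogeneous and hence has normal holonomy governed by the slice data at a point. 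The key is the dichotomy by rank: for $r\ge 3$ the resulting $s$-representation is a proper subgroup of $U(k)$, so $\Phi^\perp$ is not transitive, whereas for $r\le 2$ (the complex quadric and the rank-two Grassmannian-type orbits) one checks that $\Phi^\perp=U(k)$. This last computation is what forces the bound $r\ge 3$ rather than $r\ge 2$, and I would reuse it in the converse.

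For $(1)\Rightarrow(2)$ the first move is to pass to complex Euclidean space. Lifting $M\subset\C P^n$ to the affine cone $\widehat M\subset\C^{n+1}$ over the variety $M$ (which is compact, hence projective by Chow, since $M$ is complete) produces a complex submanifold of the same complex codimension whose radial and Hopf directions are tangent; consequently its normal bundle, normal connection and normal holonomy are identified with those of $M$. This transports the question to the Euclidean setting, where the full strength of the Normal Holonomy Theorem and of the holonomy tube construction is available. Applying \cite{AD}: at a point where the second fundamental form of $\widehat M$ has no nullity, fullness forces $\Phi^\perp$ to act irreducibly, and then \cite{AD} identifies it as the holonomy representation of an irreducible Hermitian symmetric space; non-transitivity upgrades this to $r\ge 2$. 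The degenerate alternative, that the second fundamental form has nullity everywhere, has to be excluded separately, using that such a full complete complex submanifold would be ruled by totally geodesic complex projective subspaces and reducing it to the non-degenerate case.

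The crux---and the step I expect to be the main obstacle---is to promote this algebraic knowledge of the group $\Phi^\perp$ to extrinsic homogeneity of $M$, i.e.\ to show that $M$ is actually an orbit. Here I would fix a vector $\xi$ in a principal orbit of $\Phi^\perp$ on the normal space of $\widehat M$ and form the holonomy tube $(\widehat M)_\xi$. Non-transitivity means the holonomy orbit $\Phi^\perp\!\cdot\xi$ is a proper submanifold of the sphere, so the tube carries a non-trivial parallel sub-bundle of its normal bundle and a correspondingly constrained shape operator. The plan is then to read off the singular data of $(\widehat M)_\xi$---its focal manifolds and the eigenvalues of the shape operators in the tube directions---and to combine this with the completeness (equivalently compactness) of $M$ and with the compatibility with $J$. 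The expected output is that $\widehat M$ is extrinsically homogeneous with the special second fundamental form characterizing orbits of $s$-representations; the $J$-equivariance of the whole construction then singles out the complex orbit. The delicate points will be the global control of the focal/singular locus of the tube (where completeness and the algebraicity of $M$ enter, and where the vertex of the cone must be handled), the reduction of a possibly reducible $\Phi^\perp$ to its irreducible factors, and keeping every construction equivariant with respect to $J$.

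Once $M$ is identified as the complex orbit of the isotropy representation of an irreducible Hermitian symmetric space of some rank $r$, the final step is immediate: by the rank-$\le 2$ computation from the $(2)\Rightarrow(1)$ direction such orbits have transitive normal holonomy $\Phi^\perp=U(k)$, contradicting $(1)$; hence $r\ge 3$, which completes the proof of the equivalence.
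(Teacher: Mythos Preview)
Your architecture matches the paper's---lift to the cone $\widetilde M\subset\C^{n+1}$, identify the two normal holonomies (Lemma~\ref{lem1}), pass to a full holonomy tube, and extract an orbit structure---but two points are off. First, the cone $\widetilde M$ \emph{always} has nullity: the vertical (radial and Hopf) directions lie in $\mathcal N^{\widetilde M}$. So you cannot invoke \cite{AD} at a nullity-free point of $\widetilde M$; irreducibility of the normal holonomy comes from \cite{DS} instead. More to the point, nullity is not an obstruction to be disposed of early but the ingredient that closes the argument at the end: once one knows $\widetilde M=\bigcup_{v\in(\nu_0(K\cdot q))_q}(K\cdot q)_v$ locally (Theorem~\ref{main-cx}), the remaining step is to show $\dim(\nu_0(K\cdot q))_q=1$, i.e.\ that the nullity of $\widetilde M$ is exactly the vertical fibre. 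This is equivalent to $M$ (not $\widetilde M$) having trivial nullity, which is Theorem~\ref{no-abe} combined with Lemma~\ref{lem2}; only then does $K$ act transitively on $M$. Your expectation that $\widetilde M$ itself is an orbit is slightly wrong---it is a cone over one.

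Second, your ``read off the singular data and combine with $J$'' is exactly the crux, and the outline does not supply the idea that makes it work. The paper's mechanism is specific: choose $\xi^1_q$ so that $\Phi^\perp_q\cdot\xi^1_q$ projects to the complex orbit in $P(\nu_qN)$, and $\xi^2_q\neq0$ normal to that orbit; Ricci gives $[A_{\xi^1},A_{\xi^2}]=0$, and the complex-submanifold identities $A_{J\xi}=-JA_\xi$, $JA_\xi=-A_\xi J$ upgrade this to $A_{\xi^1}A_{\xi^2}=0$. Lifted to parallel fields $\xi,\xi'$ on the full holonomy tube $M=N_{\zeta_q}$, this yields $\mathcal H\subset\ker A^M_\xi+\ker A^M_{\xi'}$, which is precisely the hypothesis of the paper's ``main tool'' Theorem~\ref{main-lemma}: it forces the canonical foliation of $M$ (by holonomy tubes over a focal manifold at infinity) to have isoparametric leaves, and then Thorbergsson's theorem makes $\pi(H^\xi(x))$ an orbit of an $s$-representation. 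That product identity $A_{\xi^1}A_{\xi^2}=0$---not generic $J$-equivariance---is where being complex is decisive; without it the tube analysis you sketch does not, as far as I can see, yield homogeneity.
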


Notice that $(2) \Rightarrow (1)$ was proved in \cite{CD}. It is well-known that
\textit{the complex orbit $M$, in the complex projective space, of the isotropy
representation of an irreducible Hermitian symmetric space is extrinsic symmetric} or equivalently its second fundamental form is parallel \cite{CD}. Thus, \textit{a full and complete complex submanifold $M \subset \C P^n$ whose normal
holonomy group is not transitive on the unit sphere of the normal space has parallel second fundamental form}.

\smallskip

For complete complex submanifolds of $\CC^n$ we prove that the normal holonomy is generic

\begin{theorem}\label{berger-c}
The normal holonomy group of  a complete  irreducible and full immersed complex submanifold of $\CC^n$  acts transitively on the unit sphere of the normal space. Indeed, $\Phi ^\perp=U(k)$, where $k$ is the codimension of the submanifold.
\end{theorem}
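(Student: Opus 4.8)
The plan is to argue by contradiction. Suppose $\Phi^\perp$ does not act transitively on the unit sphere of $N_pM$. The decisive but inexpensive observation is that it then suffices to force $M$ to be compact: the function $z\mapsto\|z\|^2$ is strictly plurisubharmonic on $\CC^n$, so its restriction to the complex submanifold $M$ is subharmonic and admits no interior maximum; on a compact $M$ it would be constant, making $M$ a point and contradicting $\dim M>0$. Equivalently, $\CC^n$ carries no compact complex submanifold of positive dimension. So the entire problem reduces to proving that non-transitivity of $\Phi^\perp$ confines $M$ to a sphere.

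To exploit non-transitivity I would first analyze the normal holonomy representation. Since $M$ is complex, $\Phi^\perp$ acts $\CC$-linearly on $N_pM$, commuting with the ambient complex structure $J$. By the Normal Holonomy Theorem its action decomposes into a trivial part and irreducible $s$-representations, and by \cite{AD} the $\CC$-linearity forces each nontrivial irreducible factor to be the holonomy representation of an irreducible Hermitian symmetric space. Among these the action is transitive on the unit sphere exactly in the rank one case, namely $U(m)$ on $\CC^m$; hence non-transitivity produces a factor which is the holonomy representation of a Hermitian symmetric space of rank $\ge 2$. Here I would use fullness together with the no-nullity criterion of \cite{AD} to guarantee that such a factor is genuinely present. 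The degenerate possibility, that the second fundamental form has nullity everywhere, must be disposed of separately: a complete complex submanifold of $\CC^n$ with relative nullity is ruled by complex affine subspaces, and the resulting splitting or cylinder structure contradicts the irreducibility (and fullness) of $M$.

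The central step -- and the main obstacle -- is to promote this infinitesimal, algebraic information to the global geometric conclusion that $M$ has parallel second fundamental form. For this I would follow the holonomy-tube strategy announced in the abstract. Choosing a normal holonomy orbit $\Phi^\perp \cdot \xi$ which, by non-transitivity, is a proper submanifold of the normal sphere, I would form the associated holonomy tube and read off its singular (focal) data. The rank $\ge 2$ symmetric geometry of the normal holonomy representation, together with $J$-invariance, should force the shape operators along parallel normal fields to have constant eigenvalues and ultimately $\nabla\alpha=0$; this is the complex, flat-ambient analogue of the computation performed in the $\CC P^n$ case, and it is where essentially all the work resides. One must be careful to propagate completeness of $M$ to completeness of the holonomy tube so that the singular-data arguments remain valid globally.

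Once $\nabla\alpha=0$ is in hand the argument closes quickly. A complete, full, irreducible Euclidean submanifold with parallel and nonvanishing second fundamental form is, by Ferus' theorem, the standard embedding of a symmetric $R$-space, hence contained in a sphere and in particular compact. (If instead $\alpha\equiv 0$, then $M$ is a complex affine subspace, which contradicts fullness unless $M=\CC^n$ has no normal directions.) A compact positive-dimensional complex submanifold of $\CC^n$ cannot exist, so we reach the desired contradiction. Therefore $\Phi^\perp$ acts transitively, and being the nontrivial part of an $s$-representation on the complex normal space it must equal $U(k)$ with $k=\codim M$.
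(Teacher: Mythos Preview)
Your plan has a genuine gap at precisely the point you flag as the ``main obstacle''. You assert that the holonomy-tube analysis ``should force'' $\nabla\alpha=0$, after which Ferus' theorem and the nonexistence of compact complex submanifolds in $\CC^n$ would close the argument. But you give no mechanism for this implication, and in fact the paper's holonomy-tube machinery does \emph{not} yield $\nabla\alpha=0$ in the Euclidean case. What it yields (Theorem~\ref{main-cx}) is a local description
\[
N=\bigcup_{v\in(\nu_0(K.q))_q}(K.q)_v
\]
as a union of parallel orbits of a Hermitian $s$-representation $K$, together with the information that $(\nu_0(K.q))_q$ lies in the \emph{nullity} of $\alpha^N$. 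This is incompatible with your intended intermediate conclusion: a full irreducible submanifold with $\nabla\alpha=0$ and $\alpha\not\equiv 0$ is a symmetric $R$-space and has no nullity. So the holonomy-tube argument is aimed at the wrong target if you try to extract parallelism of $\alpha$ from it; that conclusion appears only in the projective case, \emph{after} one has projected out the nullity direction and shown $\dim\nu_0(K.q)=1$.

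The paper's endgame is quite different from yours and avoids $\nabla\alpha=0$ entirely. From the local description above, the position vector lies in the nullity, so by completeness and analyticity the real line $t\mapsto tp$ stays in $N$ with $T_{tp}N=T_pN$ constant. Each isotropy group $K_{tp}$ must preserve this tangent space; letting $t\to 0$ forces $K$ itself to preserve $T_pN$, contradicting the irreducibility of the $K$-representation on $\CC^n$. This is the short argument that actually finishes the proof, and it is missing from your proposal.
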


The proofs of the above results will be given in Sections~\ref{3} and \ref{4} respectively.

\smallskip

The completeness assumption cannot be dropped either in  Theorem \ref{berger-cp} or in  Theorem \ref{berger-c} (see Section~\ref{further}).

\medskip

The main tool is the study of the full holonomy tube (i.e., a holonomy tube with flat normal bundle, see \S~\ref{1.5}) of an Euclidean submanifold $N$ whose  normal holonomy group acts irreducibly and non-transitively on the unit sphere of the normal space. Let $M=N_\zeta$ be the full holonomy tube on $N$. We define a canonical foliation of $M$ whose leaves are holonomy tubes of some focal manifold as well.  It comes out there is a strong similarity with polar actions. Indeed, the orthogonal distribution to the holonomy tubes is integrable and its leaves behave like sections in a polar representation (Proposition~\ref{slice}). To show that the leaves of the canonical foliation are orbits of an isotropy representation ($s$-representation) we assume the horizontal distribution of a full holonomy tube is covered by kernels of shape operators. This implies that $M$ and $N$ are foliated by holonomy tubes around isotropy orbits (Theorem~\ref{main-lemma}).

\medskip

In order to apply this setting to a complete  irreducible and full immersed complex submanifold $M$ of the complex Euclidean space $\C^n$, we notice that if the  normal holonomy does not act transitively on the unit sphere of the normal space, then there are abundantly many kernels of shape operators in order to cover the horizontal distribution of a full holonomy tube.
Hence $M$ is foliated by holonomy tubes around orbits of the isotropy representation of a Hermitian symmetric space (Theorem~\ref{main-cx}).  Theorem~\ref{berger-c} is then a consequence of this result and the fact that  the normal holonomy group of a complex  irreducible full submanifold of $\CC^n$ acts irreducibly on the normal space \cite {DS}.

\medskip

Coming to complex submanifolds of $M \subset \C P^n$, we will lift $M$ to a submanifold $\tilde M$ of $\C^{n+1}\backslash\{ 0\}$. A key point in order to prove Theorem~\ref{berger-cp}  is then showing that the normal holonomy of  $\tilde M$ is not transitive on the unit sphere of the normal space if this is the case for $M$. This will be done in Section~\ref{4}.

\section{Preliminaries}\label{1}

We begin recalling some basic facts, which are now well-known, and have been extensively used by the authors in their work on submanifold geometry.
For most of the proofs we refer to \cite{BCO}.

\subsection{General notation and basic facts.}\label{1.1}

Let $M\subset \RR^n$ be a Euclidean submanifold, with induced metric $\langle \, , \, \rangle$ and  Levi-Civita connection $\nabla $.
We will use the notation $\nabla ^E$ for the  Levi-Civita connection in $\RR^n$.

We will always denote by  $\nu M := TM^\perp$  the normal bundle of $M$ endowed with
 the normal connection $\nabla ^\perp$. The maximal parallel and flat subbundle of $\nu M$
 will be written as $\nu _0 M$. Since we
 are working locally, all manifolds will be assumed to be simply connected. Hence $\nu _0 M$ is globally flat, that is, $\nu_0 M$ is spanned by the parallel normal fields to $M$. The normal curvature tensor will be denoted
 by $R^\perp$. We have that $R^\perp _{X,Y}\xi =0$, for $X$, $Y$ tangent fields and $\xi$ normal field lying in
 $\nu_0 M$.

\smallskip

The second fundamental form (with respect to the ambient
 Euclidean space) will be denoted by  $\alpha $  and the associated shape operator by $A$. These two  tensors are related by the well known formula, for
 any
 $X,\, Y$ tangent fields  and $\xi$ normal field,
 $\langle \alpha (X,Y), \xi \rangle = \langle A_\xi X, Y\rangle$,
 which is symmetric in $X, \, Y$.

 When there are several submanifolds involved, and it is
 not clear form the context, we add an upper script $M$, e.g. $\alpha ^M$ or $A^M$.

The connection $\nabla \oplus \nabla ^\perp$ of $TM\oplus\nu M$ will be denoted by $\bar
 {\nabla}$.

\medskip

 We recall the well-known formulae relating the
 basic objects in submanifolds geometry
   $$\langle R_{X,Y}Z, W\rangle = \langle \alpha (X,W), \alpha (Y,Z) \rangle - \langle \alpha (X,Z), \alpha (Y,W)
 \rangle\, , \leqno{\textit{(Gauss)}}$$
  $$\langle (\bar \nabla_XA)_\xi Y , Z\rangle \text {, or equiv., }  (\bar\nabla _X\alpha)(Y,Z)
   \text {  are symmetric in } X,\, Y,\,  Z \, , \leqno{\textit{(Codazzi)}}$$
$$\langle R^\perp_{X,Y}\xi ,\eta \rangle = \langle [A_\xi , A_\eta]X, Y\rangle\, .
\leqno{\textit{(Ricci)}}$$ \medskip
 Let $N$ be another
submanifold such that $M\subset N\subset \RR^n$. We say that $M$ is \textit{invariant under the shape operator}
$A^N$, briefly $M$ is $A^N$-invariant, if $A^N_\eta (T_xM)\subset T_xM$ for all $x\in M$, $\eta \in  \nu _xM$.
Equivalently, $\alpha ^N (T_xM, \nu_xM\cap T_xN) =0$. Observe that in this case $\nu N_{|M}$ is a parallel
subbundle of $\nu M$.

A distribution $\Cal D$ of $N$ is called $A^N$-invariant if $A^N_\eta (\Cal D_x)\subset \Cal D_x$, for all $x\in
N$, $\eta \in  \nu _x N$.

\medskip

The linear subspace of $T_pM$
\[{\mathcal N}_p=\bigcap_{\xi \in \nu_pM} \ker
A_\xi = \{ X_p \in T_pM : \alpha^M(\cdot, X) = 0 \}
\]
 is called the \textit{nullity space of $M$ at $p$}.
The collection of all these spaces is called the \textit{nullity
distribution}
of $M$. Note that this is
actually a distribution only on any connected component
of a suitable dense and open subset of $M$.

\medskip

 The \textit{normal exponential} of the Euclidean submanifold
$M$, $\exp _\nu : \nu M \to \RR^n$,  is defined by $\exp _\nu (\xi_p) = p  + \xi_p$. We set
$$\nu ^rM = \{ \xi \in \nu M:\ ||\xi|| < r\}\text { ,} \ \ \ \ \ \ \ \ \ \ \ \ \
 S_r \nu M = \{ \xi \in \nu M:\ ||\xi|| = r \}\, .$$
If $r$ is small, by making $M$ possibly smaller around a  point $q$, $\exp _\nu : \nu ^r M \to \RR^n$ is a diffeomorphism onto its image. In this case the so-called
spherical $\epsilon$-tube around $M$, denoted by $\exp _\nu (S_\varepsilon \nu  M)$, is a submanifold of $ \RR^n$, for all $\varepsilon < r$.
\medskip

The submanifold $M\subset \RR^n$ is said to be \textit{full} if it is not contained in any proper affine
subspace of the ambient space. The submanifold $M$ is said  to be locally  \textit{reducible} if one can write
locally $M = M_1\times M_2$ where $M_1 \subset \RR^{k}$, $M_2 \subset \RR^{n-k}$ and $\RR^n$ decomposes
orthogonally as $\RR^{k}\times \RR^{n-k}$. We say that $M$ is locally \textit{irreducible} if it is not
locally reducible. There are two very useful tools for deciding whether a submanifold $M$ of Euclidean space is
not  full or reducible.

\begin{enumerate}
\item  $M$ is not full if and only if there exists parallel normal field $\xi\neq 0 $ such that $A_\xi\equiv 0$.
\item  \textit{Moore's lemma}. $M$  is locally reducible if and only if  there exists a non
 trivial $A$-invariant parallel distribution of $M$.
\end{enumerate}

Let $X^n = G/K$ be a simply connected complete symmetric space without Euclidean de Rham factor, where $G$ is
the connected component of the full group of isometries of $X$. The isotropy representation of $K$ in the
Euclidean space $T_{[e]}X \simeq \RR^n$ is called an \textit{$s$-representation}. Any principal orbit $ M = K.v$ is an
\textit{isoparametric submanifold} of $\RR^n$. Namely, $\nu M$ is globally flat and $A_\xi$ has constant eigenvalues
for any parallel normal field $\xi$ to $M$. More in general, if $M$ is not necessarily a principal orbit then it has
\textit{constant principal curvatures} \cite {HOT}, i.e. the shape operator $A_{\xi (t)}$ has constant eigenvalues for
any parallel normal field $\xi (t)$ along any curve. \medskip

Let $M$ be a Euclidean submanifold. The so-called \textit{normal holonomy group} $\Phi ^\perp _p$ of $M$ at $p$ is the
holonomy group of the normal connection of $M$ at $p$. The \textit{normal holonomy theorem} \cite {O1} states that
the connected component of the normal holonomy group acts on the normal space, up to its fixed set, as an
$s$-representation. Any $s$-representation acts \textit{polarly} on the ambient space, i.e. there exists a subspace
$\Sigma$ that meets all orbits  in an orthogonal way \cite {PT} (such a $\Sigma$ is the normal space of any
principal orbit). Conversely, given a polar representation there exists an $s$-representation with the same
orbits \cite{D}. \smallskip

A local group of isometries $G$ of a Riemannian manifold $X$ is said to act \textit{locally polarly} if the distribution
of normal spaces to maximal dimensional (local) orbits is integrable (or, equivalently, autoparallel; see \cite
{PT}). If $G$ acts locally polarly on $X$ and $S\subset X$ is a locally $G$-invariant submanifold, then  the
restriction of $G$ to $S$ acts locally polarly on $S$ (this follows form Corollary 3.2.5 and Proposition 3.2 of
\cite {BCO}, though we will only need the special cases given by Proposition 3.2.9 of this reference and Lemma
2.6 in \cite {OW}).

\smallskip

The Normal Holonomy Theorem was extended to Riemannian submanifolds of the Lorentz space \cite {OW}. The
conclusion, in this case, is that the normal holonomy acts polarly on the (Lorentzian type) normal space. This
means that the normal spaces to  any  maximal dimensional time-like orbit meet any nearby orbit orthogonally. We
will need to make use of this result, though we are only interested in Euclidean submanifolds.

\subsection{Complex submanifolds of $\C P^n$}\label{4.1}

Recall that $\C P^n$ is obtained by $\C^{n+1}\backslash \{0\}$ identifying complex lines through the origin. Hence there is a canonical projection  $\pi : \C^{n+1}\backslash\{0\} \rightarrow \C P^n$.
Of course, one may also regard $\C P^n$ as a quotient of the unit $(2n+1)$-sphere in $\C^{n+1}$ under the action of $U(1)$, i.e., $\C P^n = S^{2n+1}/U(1)$ (this is because every line in $\C^{n+1}$ intersects the unit sphere in a circle; for $n=1$ this construction yields the classical Hopf bundle). Thus one has a submersion $S^{2n+1}\to \C P^n$. The Fubini-Study metric $\langle\, ,\, \rangle_{FS} $ is then characterized by requiring this submersion to be Riemannian.

Let $M \subset \C P^n$ be a full complex submanifold of the complex projective space.

Let us denote by $\widetilde{M}$ the lift of $M$ to $\C^{n+1}\backslash\{ 0\}$, i.e. $\widetilde{M} := \pi^{-1}(M)$. Let ${\mathcal{V}}$ be the vertical distribution of the submersion $\pi: \widetilde{M} \rightarrow M$. It is standard to show that ${\mathcal{V}}\subset {\mathcal{N}}^{\widetilde{M}}$. If $X$ is a tangent vector to $M$ we will write $\widetilde{X}$ for its horizontal lift to $\C^{n+1}\backslash\{0\}$.

The submersion $\pi: \C^{n+1}\backslash\{0\} \rightarrow \C P^n$ is not Riemannian. Anyway, the following O'Neill's type formula holds

\begin{proposition}[O'Neill's type formula]\label{lift} Let $\widetilde{X}, \widetilde{Y} \in \Gamma(\C^{n+1} \backslash \{0\})$ be the horizontal lift of the vector fields $X,Y \in \Gamma(\C P^n)$.
Then,
\begin{equation} \label{fun} (D_{\widetilde{X}}\widetilde{Y})_{\widetilde{p}} =  (\widetilde{{\nabla^{FS}_XY}})_{\widetilde{p}} + {\mathcal{O}}(\widetilde{X},\widetilde{Y})
\end{equation}
where ${\mathcal{O}}(\widetilde{X},\widetilde{Y}) \in {\mathcal V}_{\widetilde{p}}$ is vertical.
\end{proposition}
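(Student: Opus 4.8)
The plan is to exploit the factorization of $\pi$ through the round sphere, since $\pi\colon\C^{n+1}\setminus\{0\}\to\C P^n$ is \emph{not} a Riemannian submersion and O'Neill's classical formula does not apply to it directly. Write every point as $\widetilde p=ru$ with $r=\lvert\widetilde p\rvert>0$ and $u=\widetilde p/r\in S^{2n+1}$, so that the flat metric of $\C^{n+1}$ becomes the cone metric $dr^2+r^2 g_{S^{2n+1}}$. The fibre of $\pi$ through $\widetilde p$ is the complex line $\C\widetilde p$, so the vertical distribution is $\mathcal V_{\widetilde p}=\C\widetilde p=\operatorname{span}_{\R}\{\widetilde p,i\widetilde p\}$, which splits into the radial direction $\R u$ and the Hopf direction $\R\, iu$; the horizontal distribution is its Euclidean orthogonal complement $(\C\widetilde p)^{\perp}$. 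On the unit sphere this horizontal space coincides with the horizontal space of the Hopf fibration $\pi_S\colon S^{2n+1}\to\C P^n$, which \emph{is} a Riemannian submersion for $\langle\,,\,\rangle_{FS}$ by the very definition of the Fubini--Study metric. This is the structural fact I would lean on.

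First I would relate the two horizontal lifts. Because $\pi$ is invariant under real scaling, $d\pi_{ru}=\tfrac1r\,d\pi_u$ on the common horizontal space, and hence the $\pi$-horizontal lift of $X$ is obtained from the $\pi_S$-horizontal lift $\widehat X$ by $\widetilde X(ru)=r\,\widehat X(u)$. I would then compute $D_{\widetilde X}\widetilde Y$ at $\widetilde p=ru$ directly, $D$ being the flat directional derivative of $\C^{n+1}$. Writing $\widetilde Y(v)=\lvert v\rvert\,\widehat Y(v/\lvert v\rvert)$ and differentiating by the product rule, the radial term drops out because $\widetilde X\cdot r=\langle u,r\widehat X(u)\rangle=0$ (as $\widehat X(u)\perp u$), while the chain rule together with $du_{ru}(r\widehat X(u))=\widehat X(u)$ shows that the remaining term differentiates $\widehat Y$ along $\widehat X$. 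This yields the clean scaling identity $(D_{\widetilde X}\widetilde Y)_{ru}=r\,(D_{\widehat X}\widehat Y)_u$, reducing everything to a computation on the unit sphere.

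Next I would decompose $D_{\widehat X}\widehat Y$ on $S^{2n+1}$ into horizontal and vertical parts. The Gauss formula for the unit sphere gives $D_{\widehat X}\widehat Y=\nabla^{S}_{\widehat X}\widehat Y+\alpha^{S}(\widehat X,\widehat Y)$ with $\alpha^{S}(\widehat X,\widehat Y)=-\langle\widehat X,\widehat Y\rangle\,u$, a purely radial (hence $\pi$-vertical) term. Applying O'Neill's formula to the \emph{Riemannian} submersion $\pi_S$, the horizontal part of $\nabla^{S}_{\widehat X}\widehat Y$ is exactly the horizontal lift $\widehat{\nabla^{FS}_XY}$, while its vertical part is the integrability term $\tfrac12[\widehat X,\widehat Y]^{\mathcal V}$, which points along $iu$ and is therefore again $\pi$-vertical. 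Altogether $(D_{\widehat X}\widehat Y)_u=\widehat{\nabla^{FS}_XY}(u)+w$ with $w\in\C u$.

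Finally I would scale back up. Multiplying by $r$ and using $\widetilde Z(ru)=r\widehat Z(u)$ for $Z=\nabla^{FS}_XY$ gives $r\,\widehat{\nabla^{FS}_XY}(u)=(\widetilde{\nabla^{FS}_XY})_{ru}$, while $r\,w\in\C u=\C\widetilde p=\mathcal V_{\widetilde p}$ remains vertical; setting $\mathcal O(\widetilde X,\widetilde Y):=r\,w$ gives exactly \eqref{fun}. The one genuine obstacle is precisely that $\pi$ is not Riemannian, so the argument cannot be a one-line invocation of O'Neill; the work is in setting up the cone splitting, tracking the radial conformal factor $r$ through the scaling identity, and checking that \emph{both} correction terms produced on the sphere (the sphere's second fundamental form along $u$ and the Hopf integrability tensor along $iu$) lie in the single complex vertical line $\C\widetilde p$.
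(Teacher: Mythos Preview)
Your proof is correct and follows essentially the same approach the paper sketches: the paper simply remarks that $d\pi$ is a dilatation on the horizontal distribution with conformal factor $\lambda$ constant along horizontal curves (equivalently, horizontal curves stay in a fixed sphere $S^{2n+1}(1/\lambda)$), so that the standard O'Neill argument applies; your factorization through the unit sphere and the scaling identity $(D_{\widetilde X}\widetilde Y)_{ru}=r\,(D_{\widehat X}\widehat Y)_u$ make this reduction explicit. The paper gives no further details beyond the reference to \cite{O'N}, so your write-up is in fact a fleshed-out version of the same idea.
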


The  proof is the same as the standard one \cite{O'N}.
Indeed, the restriction $d \pi : {\mathcal V}^{\perp} \rightarrow T \C P^n$ is a dilatation, i.e. $\pi^* \langle\, ,\, \rangle_{FS} = \lambda^2 \langle\, ,\, \rangle$, where $\pi^*$ is the pullback to the horizontal part. An important remark is that the function $\lambda$ is constant on horizontal curves. Hence moving along horizontal curves one remains in the same sphere $S^{2n+1}$ (of radius $1/\lambda$).

\subsection{Parallel normal fields} \label{1.2}
Let $M\subset \RR^n$ be a submanifold and let $\xi$ be a non-umbilic parallel normal field to $M$.
Since we are working locally, we may assume that the different eigenvalue functions of the shape
operator $A_\xi$, $\lambda _1, \cdots, \lambda_g :M\to \RR$  have constant multiplicities and so they are
differentiable functions. Let $E_1\cdots , E_g$ be their associated smooth eigendistributions, i.e., $TM = E_1
\oplus \cdots \oplus E_g$ and $A_{\xi \, |E_i} = \lambda _i \Id _{|E_i}$. The eigendistributions $E_1, \cdots ,
E_g$ are integrable, due to the Codazzi identity. Moreover, each eigendistribution is $A$-invariant.  Indeed,
since $\nabla ^\perp \xi = 0$, $\langle R^\perp _{X,Y}\xi , \eta \rangle = 0$ and so, by the Ricci identity,
$[A_\xi, A_\eta] =0$, for all $\eta$ normal field to $M$.

Assume that one of the eigenvalues, let us say $\lambda _1$ is constant. Using again the Codazzi identity, we get that the eigendistribution
$E_1$  is not only integrable but also autoparallel in $M$ (see
\cite {BCO}). Moreover, any (totally geodesic) integral manifold $S_1(x)$ of $E_1$ is not a full submanifold of
$\RR^n$. Indeed,
$$S(x) \subset  x + T_xS(x) \oplus \nu_x M\, .$$
If the submanifold $M \subset \RR^n$ has flat normal bundle, then all the shape operators
commute and so they can be simultaneously diagonalized. Around a generic point,  there are (unique, up to order)
normal fields $\eta _1 , \cdots , \eta _g$, the so-called \textit{curvature normals}, and $A$-invariant eigendistributions
$E_1, \cdots , E_g$ such that
$$TM = E_1 \oplus \cdots \oplus E_g$$
and $A_{\xi|\,E_i} = \langle \xi , \eta_i\rangle \Id _{E_i}$, for all normal sections $\xi$.

The integral manifolds of $E_i$ are umbilical submanifolds of the ambient space (if $\dim E_i \geq 2$). If a curvature
normal $\eta _i$ is parallel (in the normal connection), then $E_i$ is an autoparallel distribution of $M$.
Moreover, any leaf $S_i(q)$ of $E_i$ is (an open subset of) a sphere, which is totally geodesic in $M$. In fact,
$S_i(q)$ is the sphere of  the affine subspace $q + E_i(q)\oplus \eta_i (q)$ centered at $q + ||\eta_i||^{-2}\eta_i(q)$

\smallskip

Let now $M\subset N \subset \RR^n$ be submanifolds with flat normal bundle and such that $M$ is
$A^N$-invariant. Observe that $\nu N_{|\, M}$ is a parallel (and flat) subbundle of $\nu M$. We relate the curvature normals in  $N$ with the ones in $M$:

\begin{lemma}\label{curvnorm}
Let $M\subset N \subset \RR^n$ be submanifolds with flat normal bundle and such that $M$ is
$A^N$-invariant. Assume that $\eta$
is a parallel curvature normal of $N$ with associated autoparallel eigendistribution  $E$.
Suppose $\bar E := E_{|\, M}$ is contained in $TM$. Then $\bar \eta := \eta _{|\,M}$ is a parallel curvature normal
with associated (autoparallel)  eigendistribution $\bar E$.
\end{lemma}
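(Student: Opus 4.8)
The plan is to work relative to the orthogonal splitting of $\nu M$ induced by the inclusion $M\subset N$ and to identify $\bar E$ with an intersection of shape–operator eigenspaces, which will hand us $A^M$–invariance essentially for free. Write $\nu_xM=\nu_xN\oplus W_x$, where $W_x:=T_xN\cap\nu_xM$ is the normal space of $M$ inside $N$; since $M$ is $A^N$–invariant, $\nu N|_M$ is a parallel (hence flat) subbundle of $\nu M$, and so is $W$. For $X,Y\in T_xM$ the second fundamental form splits as $\alpha^M(X,Y)=\alpha^N(X,Y)+\alpha^{M\subset N}(X,Y)$ with the two terms in $\nu_xN$ and $W_x$, whence $A^M_\zeta=A^N_\zeta|_{T_xM}$ for every $\zeta\in\nu_xN$.

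First I would settle the parallelism of $\bar\eta$. Because $\nu N|_M$ is parallel in $\nu M$, for any section $\zeta$ of $\nu N|_M$ and $X\in T_xM$ the normal derivatives computed in $M$ and in $N$ coincide; applying this to $\eta$ and using that $\eta$ is $\nabla^\perp$–parallel in $N$ gives $\nabla^\perp\bar\eta=0$. Next I would show that $\bar E$ is autoparallel with the same leaves as $E$. As $E$ is autoparallel in $N$ and $E_x\subset T_xM$ by hypothesis, for $X,Y\in\bar E$ we get $\nabla^N_XY\in E\subset TM$, so $\nabla^M_XY=\nabla^N_XY\in\bar E$ and $\alpha^{M\subset N}(X,Y)=0$. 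Hence each leaf $S$ of $\bar E$ is the corresponding leaf of $E$, namely an open piece of the round sphere of radius $\|\eta\|^{-1}$ in the affine subspace $x+E_x\oplus\langle\eta_x\rangle$; being totally geodesic in $M$, it is umbilical in $\RR^n$ and $\alpha^M(X,Y)=\langle X,Y\rangle\,\eta$ for $X,Y\in\bar E$.

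The decisive step—and the one I expect to be the main obstacle—is to pass from ``$A^M_\zeta|_{\bar E}$ is scalar for $\zeta\in\nu N$'' to ``$A^M_\xi|_{\bar E}$ is scalar for every $\xi\in\nu M$'', since a priori the extra directions $W$ (normal to $M$ in $N$) carry no information coming from $N$. I would resolve this by exhibiting $\bar E$ as an intersection of $M$–eigenspaces. Choosing a $\nabla^\perp$–parallel frame $\zeta_1,\dots,\zeta_m$ of the flat bundle $\nu N|_M$, each $\ker\bigl(A^M_{\zeta_a}-\langle\eta,\zeta_a\rangle\,\id\bigr)$ is $A^M$–invariant by the Ricci equation (as $\zeta_a$ is parallel), and using $A^M_{\zeta}=A^N_{\zeta}|_{T_xM}$ their intersection is exactly $T_xM\cap E_x=\bar E_x$, where $E_x\subset T_xM$ is used once more. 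Thus $\bar E$ is $A^M$–invariant. Finally, for $\xi\in W_x$ and $X,Y\in\bar E$ the umbilicity relation gives $\langle A^M_\xi X,Y\rangle=\langle X,Y\rangle\langle\eta,\xi\rangle=0$, so the symmetric operator $A^M_\xi|_{\bar E}$, which preserves $\bar E$, vanishes; together with $A^M_\zeta|_{\bar E}=\langle\bar\eta,\zeta\rangle\,\id$ for $\zeta\in\nu N$ and $\langle\bar\eta,\xi\rangle=0$ for $\xi\in W$, this yields $A^M_\xi|_{\bar E}=\langle\bar\eta,\xi\rangle\,\id$ for all $\xi\in\nu M$. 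Since this common eigenspace is precisely $\bar E$, and $\bar\eta$ is parallel and $\bar E$ autoparallel, $\bar\eta$ is a parallel curvature normal of $M$ with associated eigendistribution $\bar E$, as claimed.
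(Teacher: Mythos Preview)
Your argument is correct and follows essentially the same route as the paper's: both split $\nu M=\nu N|_M\oplus W$, handle the $\nu N$-directions via $A^M_\zeta=A^N_\zeta|_{TM}$, and use commutativity of all shape operators (flat normal bundle, Ricci) to see that $\bar E$ is $A^M$-invariant. The only cosmetic difference is in the $W$-directions: the paper computes $\langle A^M_\zeta X,Y\rangle=-\langle\nabla^N_X\zeta,Y\rangle=\langle\zeta,\nabla^N_XY\rangle=0$ directly from the autoparallelism of $E$, whereas you pass through the umbilicity of the leaf to get $\alpha^M|_{\bar E\times\bar E}=\langle\cdot,\cdot\rangle\,\bar\eta\perp W$---the same conclusion, reached one step less directly (and your sphere description needs the obvious adjustment when $\bar\eta=0$, where the leaf is affine).
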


\begin{proof} Let $\xi$ be a parallel normal field to $M$
which lies in $\nu N_{|\, M}$. Then $A ^N _{\xi\, |\, TM} = A_\xi ^M$ and so $ A ^M _{\xi \, |\, \bar E} =
\langle \xi , \bar \eta \rangle \Id_{\bar E}$.
\newline Let now $\zeta$ be a parallel normal field to $M$ which is tangent to $N$. Since $A^M_\zeta$ commutes
with all shape operators $A^M_\xi$,  it  commutes, in particular, with all $A^M _\xi$ such that  $\xi$ lies in
$\nu N_{|\, M}$. Thus $A^M_\zeta$ has to leave the common eigenspace $\bar E$ invariant. Let us compute
$A^M_{\zeta \, |\,\bar E}$. Let $X, Y$ be  tangent fields to $N$ which lie in $E$. Then $$\langle A^M_\zeta(X),
Y\rangle  = - \langle \nabla ^N _X \zeta , Y \rangle = \langle \zeta , \nabla ^N _X Y \rangle = 0$$
 since $E$ is
autoparallel. Then $A^M_{\zeta \, |\, E}=0 =  \langle \zeta , \bar \eta \rangle \Id_E$. This shows that $\bar
\eta $ is a (parallel) curvature normal of $M$ with associated eigendistribution $\bar E$.
\end{proof}

The same is true if $M,N$ are Riemannian submanifolds of Lorentz space.

\subsection{Parallel and focal manifolds}\label{1.3}

 Let $M \subset \RR^n$ be a  submanifold and let $\xi \neq 0$ be a
parallel normal field to $M$. Observe that this implies that $\nu _0M$ is a non trivial subbundle of $\nu M$.

\smallskip

Assume that $1$ is not an eigenvalue of $A_{\xi(x)}$, for any $x\in M$. The \textit{parallel manifold} is defined by
$$M_\xi := \{ x + \xi (x): x \in M\}$$
and is a submanifold of $\RR^n$.
Note that the normal spaces $\nu_p M$ and $\nu_{p+\xi (p)} M_\xi$ identify since they are parallel (affine) spaces in $\RR^n$.

\medskip

If $1$ is a constant eigenvalue of $A_\xi$ with constant multiplicity then $M_\xi$ is also a submanifold of
Euclidean space, a so-called \textit{focal (or parallel focal) manifold to $M$}, and $$\pi : M \to M_\xi$$ is a submersion, where $\pi (x)= x + \xi
(x)$  (but not in general a Riemannian submersion).
The fibers $\pi ^{-1}(\{\pi (x)\})$ are
totally geodesic in $M$ and $A$-invariant. Indeed, these fibers are the integral manifolds of the
eigendistribution
$$\ve^\pi := \ker  (\Id - A_\xi )$$ associated to the constant eigenvalue $1$.

There is an orthogonal decomposition
$$T_xM = T_{\pi(x)}(M_\xi) \oplus \ve^\pi _x$$
and,  by what remarked in the previous subsection,
$$\pi ^{-1} (\{ \pi (x)\}) \subset \pi (x) + \nu _{\pi (x)} (M_\xi)$$
that is, the fibers lie in the normal space of the focal manifold.

From the last two relations, there is  the orthogonal splitting
$$ \nu _{\pi (x)}(M_\xi) = \nu _x M \oplus \ve^\pi_x\, .$$

The horizontal distribution $\Cal H ^\pi$ of $M$ is the one perpendicular to $\ve^\pi$. Observe that the
horizontal distribution $\Cal H ^\pi$ is not in general integrable but it is $A^M$-invariant, since $\ve^\pi$
is so. By the above relations one has, as subspaces,
$$\Cal H ^\pi _x = T_{\pi (x)}(M_\xi)\, .$$

\subsection{Parallel transport and shape operators of parallel (focal) manifolds}
\label{sollev}

The following discussion is similar to that in \cite{HOT}.
Let  $c(t)$ be a curve in $M_\xi$ and let $q \in \pi ^{-1}(\{c(0)\})$. Then locally there is a unique horizontal lift $\tilde c(t)$ of $c(t)$ with $\tilde c (0) = q$, i.e. $\pi \circ \tilde c = c$
and $\tilde c'(t) \in \Cal H^\pi _{\tilde c (t)}$. Then, $\eta (t) = \tilde c (t) - c(t)$ is a parallel normal
field to $M_\xi$ along $c(t)$, since its Euclidean derivative at $t$ lies in $T_{c(t)}(M_\xi)$. Conversely, if
$\eta (t)$ is the parallel normal field along $c(t)$, with $\eta (0) = q - c(0)$, then $\tilde c (t): = c(t) +
\eta (t)$ is the horizontal lift of $c(t)$ with initial condition $q$. This implies the important fact that the $\nabla
^\perp$-parallel transport along a curve $c$ in $M_\xi$, joining $p$ and $q$, $\tau _c^\perp : \nu _{p}(M_\xi)
\to \nu _{q}(M_\xi)$, maps (locally) $\pi ^{-1}(\{p\})$ into $\pi ^{-1}(\{q\})$.

\medskip

Let now $\tilde \beta (t)$ be a horizontal curve in $M$ and let $\beta (t) = \pi (\tilde \beta (t))$.
Let  $\eta (t)$ be a parallel normal field to $M$ along $\tilde \beta (t)$. Since $\nu _{\tilde \beta (t)}M
\subset \nu _{ \beta (t)}(M_\xi)$, then $\eta (t)$ may  also be regarded as a normal field to $M_\xi$ along
$\beta (t)$. Moreover,  $\eta (t)$ is also a parallel normal field to $M_\xi$ along $\beta (t)$. Indeed, since
$\eta (t)$ is parallel along $\tilde \beta (t)$, one has that
\begin{equation}\label{I}
\frac {d}{dt}\eta (t) = -A^M _{\eta (t)}.\tilde
\beta ' (t) \subset \Cal H ^\pi _{\tilde \beta (t)} = T_{\beta (t)}(M_\xi)
\end{equation}
by the $A^M$ invariance of $\Cal H
^\pi$. \medskip

Using (\ref{I}), since $\beta (t) = \pi (\tilde \beta (t)) = \tilde \beta (t) + \xi
(\tilde \beta (t))$ one has that
$$
\beta '(t) = \tilde \beta '(t) - A^M_{\xi (\tilde \beta (t))}. \tilde \beta
'(t) = (\Id - A^M_{\xi (\tilde \beta (t))}).\tilde \beta '(t).
$$
On the other hand, since $\eta (t)$ is a parallel normal field along
$\beta (t)$,
\begin{equation}\label{II} \frac {d}{dt}\eta (t) = -A^{M_\xi}_{\eta (t)}. \beta '(t)\, .
\end{equation}
Then,  since the expressions (\ref{I}) and (\ref{II}) coincide, and $\tilde \beta (t)$ is an arbitrary horizontal curve, one
gets the well-known formulae relating the shape operators of $M$ and $M_{\xi}$, sometimes called ``\textit{tube formulae}'' \cite{BCO}
\begin{equation}\label{tubo1} A^{M_\xi}_{\eta _x} = A^M_{\eta _x}  (\Id - A^M_{\xi (x)})^{-1}_{|\Cal H ^\pi _x}
\end{equation}
for all $\eta _x \in \nu _xM$

In a similar way we have
\begin{equation}\label{tubo2}A^{M}_{\eta _x \, |\Cal H^\pi} = A^{M_\xi}_{\eta _x}  (\Id - A^{M_\xi}_{-\xi (x)})^{-1}
\end{equation}
for all $\eta _x \in \nu _xM$.

\subsection{Parallel manifolds at infinity}\label{1.4}

Let $M\subset \RR^n$ be a submanifold with a parallel normal field $\xi$. In some cases, for our geometric
study of $M$, there are not enough parallel manifolds $M_{\lambda \xi}$ to $M$ in the Euclidean space, $\lambda
\in \RR$. So, it is convenient to regard $M$ as a Riemannian submanifold of a Lorentz space, in which case
the family of parallel manifolds to $M$ is enlarged. This construction is worth while when $0$ is an
eigenvalue of    $A_\xi$ with constant multiplicities (otherwise, everything can be carried out in the original Euclidean space). In this case the integral manifolds of the $A$-invariant
autoparallel distribution $E = \ker  A_\xi$ are the fibers of the submersion defined by passing to a
parallel focal manifold. Observe that $E$ is not in general the nullity distribution, i.e. the one given by the
intersection of the kernels of all shape operators.

For this purpose, let $L^{n+2} = (\RR^{n+2}, \langle\ ,\ \rangle)$, where
$$\langle x,y\rangle = -x_1y_1 +
x_2y_2 + \cdots + x_{n+2}y_{n+2}\, . $$
 Recall that the hyperbolic space of radius $r$ is given by
$$H^{n+1} (r) = \{ x \in L^{n+2}:\, \langle x, x \rangle = -r^2,\ x_1>0\}\, .$$
In this way $H^{n+1} (r)$ is regarded as a totally umbilical (Riemannian) hypersurface of $L^{n+2}$. Indeed, the vector field $\eta(x)=-x$ is a  parallel (time-like) normal field to $L^{n+2}$ and $A^{L^{n+2}}_\eta = \Id$.
Now regard $\RR^n$ as a horosphere $Q^n$ of the hyperbolic space, which is also a totally umbilical
hypersurface. In this way one can regard $M$ as a submanifold of Lorentz space. Now there is in $M$ an extra,
somewhat trivial, parallel normal field given by the restriction to $M$ of the vector field of the
hyperbolic space which we also call $\eta$ (the normal vector  field to the horosphere, in the hyperbolic space, gives no useful information). Now, in the Lorentz space, we have  the family of parallel manifolds to
$M$ given by $M_{a\xi + b\eta}$, cf. \cite{DG}.

There is no essentially new parallel manifold except the focal one $M_{\tilde \xi}$, where $\tilde \xi = a\xi
+\eta$,  $a$ is small enough so that $\tilde \xi$ be time-like (it is convenient, for reasons related to the
normal holonomy of the focal manifold, to choose a time-like parallel normal field for the focalization).

In this way $$\pi : M \to M_{\tilde \xi}\, ,$$ where $\pi (x)= x + \tilde \xi (x)$ is a submersion. The fibers $\pi
^{-1}(\{\pi (x)\})$ are just the integral manifolds of $E = \ve^\pi = \ker  A_\xi = \ker (\Id -
\tilde A_{\tilde \xi})$, where $\tilde A$ denotes the shape operator of $M$ as a submanifold of the Lorentz
space. Note that the focal manifold $M_{\tilde \xi}$ is contained in a de Sitter space of radius $\vert a \vert  \Vert \xi \Vert$.

One can relate parallel transport and shape operators in parallel (focal) manifolds like in the previous subsection (see \cite{OW, BCO}).

\subsection{Holonomy tubes}\label{1.5}

Let $M$ be a  Euclidean submanifold or a Riemannian  submanifold of the Lorentz space. Let $\xi _p \in \nu _pM$. If
$M$ is a submanifold of Lorentz space then $\xi_p$ is assumed to be time-like. The \textit{holonomy tube} around $M$
through $\xi _p$ is defined by
$$M_{\xi _p} = \{ c(1) + \xi (1) \} =  \{ c(1) + \tau _c^\perp (\xi _p) \}\, ,$$
where $c:[0,1] \to M$ is an arbitrary curve starting at $p$ and $\xi (t)$ is the parallel transport of $\xi _p$
along $c(t)$. If $1$ is not an eigenvalue of the shape operator $A_{\xi _p}$,  then $M_{\xi _p}$ is a
submanifold of the ambient space, e.g. if $\xi _p$ is near $0$ (perhaps by making $M$ smaller). One has a
projection $\pi : M_{\xi _p} \to M$, defined by $\pi (c(1) + \xi (1)) = c(1)$. Moreover, $q \mapsto \eta (q): =
\pi (q) -q$ is a parallel normal field to $M_{\xi _p}$ and so we have that $M$ is a parallel manifold (in general, focal)  to its
holonomy tube. Namely,
$$ M = (M_{\xi _p})_\eta$$
Observe that $\eta (p+ \xi _p) = -\xi _p$. Note that the fibers of $\pi$ are given by the orbits of the normal
holonomy group of $M$. Namely,
$$\pi ^{-1}(\{\pi (p)\}) =  \pi (p) + \Phi ^\perp _{\pi (p)}. (p- \pi (p))$$

In the Lorentzian case this fiber is contained in the hyperbolic space of  the normal space given by the
time-like vector $\xi _p$, which is invariant under this holonomy action. Moreover, this action, restricted to
this hyperbolic space  is locally polar. Observe that in this Lorentzian case, the holonomy tube is a Riemannian
submanifold, since $M$ and the holonomy orbit $\Phi ^\perp _{\pi (p)}. (p- \pi (p))$ are so. \medskip

 On the other hand, let  $M$ be a submanifold with a parallel normal field $\eta$ such that $1$ is a constant eigenvalue,   with constant multiplicity $r$, of $A_{\eta}$, $r < \dim (M)$. By \S~\ref{sollev}  (see also \S~\ref{1.4} for the Lorentzian case), we have that
$$(M_\eta)_{-\eta (q)} \subset M$$
for all $q\in M$. That is, $M$ is foliated by the holonomy tubes around the parallel manifold $M_\eta$ (this
foliation could be trivial, i.e., with only one leaf).

\smallskip

Let us observe that if the normal vector $\xi _p \in \nu _pM$ extends to a parallel normal field
then the holonomy tube $M_{\xi _p}$ is a parallel non-focal manifold to $M$. This is equivalent to the fact that
$\xi _p$ is fixed by the normal holonomy group of $M$.

\medskip

If the orbit $\Phi ^\perp _{\pi (p)}. (p- \pi (p))$ is maximal dimensional (and hence isoparametric
in the normal space) then the holonomy tube $M_{\xi _p}$ has flat normal bundle; see \cite {HOT} for the
Euclidean case. The Lorentzian case is similar since normal holonomy orbits, through principal time-like
vectors, are isoparametric in a hyperbolic space (and also when regarded as Riemannian submanifolds of the
normal space).

Conversely, if the holonomy tube $M_{\xi_p}$ has flat normal bundle then the holonomy orbit must have flat normal
bundle, in the normal space, and hence is maximal dimensional. In the Euclidean space this is well-known, since
singular orbits of  $s$-representation must have non-trivial normal holonomy \cite {HO}. In the Lorentzian space
the polar actions are, orbit-like,  essentially the same as in Euclidean space, up to some transitive factors in
hyperbolic space or horospheres (see \cite[Theorem 2.3]{OW}).

\smallskip

We shall call \textit{full holonomy tube} a holonomy tube with flat normal bundle. By the above discussion we have

\begin{lemma}\label{full_hol-tube}
The holonomy tube $M_{\xi_p}$ has flat normal bundle, i.e., it is a full holonomy tube if and only if the normal holonomy orbit $\Phi ^\perp _{\pi (p)}. (p- \pi (p))$,
with $\pi : M_{\xi _p} \to M$ the projection, is maximal dimensional (and hence an isoparametric submanifold of the normal space $p + \nu _pM$).
\end{lemma}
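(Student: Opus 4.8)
The plan is to reduce the flatness of the normal bundle of the tube $M_{\xi_p}$ to the normal geometry of the single normal holonomy orbit $O:=\Phi^\perp_{\pi(p)}.(p-\pi(p))$ inside the normal space $\nu_{\pi(p)}M$, and then to invoke the structure theory of $s$-representations. First I would record, from \S\ref{1.3} and \S\ref{1.5}, the geometric picture: $M$ is a (focal) parallel manifold of $M_{\xi_p}$ via the parallel normal field $\eta(q)=\pi(q)-q$; the fibres of $\pi:M_{\xi_p}\to M$ are exactly the orbits $O_q=\pi(q)+\Phi^\perp_{\pi(q)}.(q-\pi(q))$, which lie in the affine normal space $\pi(q)+\nu_{\pi(q)}M$ and are totally geodesic in $M_{\xi_p}$; and the tangent space splits as $T_qM_{\xi_p}=\Cal H^\pi_q\oplus\ve^\pi_q$ with $\ve^\pi_q=T_qO_q\subset\nu_{\pi(q)}M$ the vertical (orbit) directions. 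A dimension count then identifies $\nu_qM_{\xi_p}$ with the normal space $\mathcal W_q$ of the orbit $O_q$ \emph{inside} $\nu_{\pi(q)}M$.

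The key step is to relate the shape operators of $M_{\xi_p}$ to those of the orbit. Since $\eta$ is parallel, every $A^{M_{\xi_p}}_w$ commutes with $A^{M_{\xi_p}}_\eta$ and hence preserves the eigendistribution $\ve^\pi=\ker(\Id-A^{M_{\xi_p}}_\eta)$. For $w\in\mathcal W_q=\nu_qM_{\xi_p}$, the fact that $O_q$ is totally geodesic in $M_{\xi_p}$ and contained in the affine space $\pi(q)+\nu_{\pi(q)}M$ gives that the restriction $A^{M_{\xi_p}}_w|_{\ve^\pi_q}$ equals the shape operator $A^O_w$ of $O$ viewed as a submanifold of the flat normal space $\nu_{\pi(p)}M$. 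By the Ricci equation, the vertical block of the normal curvature $\ncu$ of $M_{\xi_p}$ is therefore governed precisely by the commutators $[A^O_w,A^O_{w'}]$, i.e.\ by the normal curvature of the orbit $O$; the horizontal and mixed blocks are then computed from the tube formulae \eqref{tubo1}--\eqref{tubo2}, exactly as in \cite{HOT}.

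With this in hand the two implications follow from the structure of $s$-representations. If $O$ is maximal dimensional then, by the Normal Holonomy Theorem \cite{O1}, it is a principal orbit of the $s$-representation $\Phi^\perp$, which acts polarly \cite{PT}; hence $O$ is isoparametric in $\nu_{\pi(p)}M$, with flat normal bundle and commuting shape operators. Feeding this into the tube computation above (which is the content of \cite{HOT}) makes all shape operators of $M_{\xi_p}$ commute, so $\ncu\equiv0$ and $M_{\xi_p}$ is a full holonomy tube. Conversely, if $O$ is not maximal dimensional it is a singular orbit of the $s$-representation, which by \cite{HO} has non-trivial normal holonomy and hence non-flat normal bundle; by the vertical-block identity of the previous paragraph, there are then $w,w'\in\mathcal W_q$ with $[A^{M_{\xi_p}}_w,A^{M_{\xi_p}}_{w'}]\neq0$, so $M_{\xi_p}$ cannot have flat normal bundle. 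The Lorentzian case is identical once ``isoparametric in $\nu M$'' is replaced by ``isoparametric in the hyperbolic space determined by the time-like vector $\xi_p$'': by \cite[Theorem~2.3]{OW} polar actions in Lorentz space are orbit-equivalent to the Euclidean ones up to transitive factors in hyperbolic space or horospheres, so the same principal/singular dichotomy applies.

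I expect the main obstacle to be the forward direction, specifically checking that the horizontal and mixed blocks of the shape operators of $M_{\xi_p}$ also commute once $O$ is isoparametric. The vertical block is controlled directly by the orbit geometry, but the horizontal and mixed terms require the full tube-formula bookkeeping of \cite{HOT} relating $A^{M_{\xi_p}}$ and $A^M$ along horizontal curves; here I would lean on that reference rather than reproduce the computation.
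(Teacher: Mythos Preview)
Your proposal is correct and follows essentially the same route as the paper: the forward direction defers to \cite{HOT} (principal orbit $\Rightarrow$ isoparametric $\Rightarrow$ flat normal bundle of the tube), and the converse uses \cite{HO} that singular orbits of $s$-representations have non-trivial normal holonomy, with the Lorentzian case handled via \cite[Theorem~2.3]{OW}. Your explicit identification of $A^{M_{\xi_p}}_w|_{\ve^\pi}$ with the orbit shape operator $A^O_w$ simply makes concrete the paper's assertion that flatness of $\nu M_{\xi_p}$ forces flatness of the orbit's normal bundle in $\nu_{\pi(p)}M$.
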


\begin{remark}\label{tubolonomico}
Let $M_{\xi_p}$ be a full holonomy tube and let $\bar \eta$ be a  curvature
normal, with associated autoparallel eigendistribution $E$ of the isoparametric submanifold $p + \Phi ^\perp
_{\pi (p)}. (p- \pi (p))$ of the normal space $p + \nu _pM$ ($\bar \eta $ must be parallel in the normal
connection). Then $\bar \eta$ is the restriction to $ p + \Phi ^\perp _{\pi (p)}. (p- \pi (p))$ of a parallel
curvature normal $\eta$ of $M_{\xi _ p}$, whose associated eigendistribution, restricted to the holonomy orbit,
coincides with $E$ (cf. Lemma~\ref{curvnorm}, \S~\ref{1.2}, page~\pageref{curvnorm}). Moreover, the restriction of $\eta$ to  any holonomy orbit is a curvature normal of this
orbit.
\end{remark}

\section{Foliation by holonomy tubes}\label{2}

This section is the main core of this paper.
We begin with a submanifold $M \subseteq \RR^n$ endowed with a parallel normal field $\xi$ such that 1 is an eigenvalue with constant multiplicity of $A_\xi$. As we know from the previous subsection, $M$ is foliated by the holonomy
tubes $H(x):=(M_\xi)_{x- \pi (x)} = (M_\xi)_{-\xi(x)} $, $x\in M$. We may   assume, since we are working locally that
all these holonomy tubes have the same dimension.

 In \S~\ref{1.6} we describe the properties of this foliation (Proposition~\ref{slice}). It comes out there is a strong similarity with polar actions. Indeed, the orthogonal distribution to $H(x)$ is integrable and its leaves $\Sigma (x)$ behave like sections in a polar representation.
In \S~\ref{2.1} we compare the eigendistributions of nearby parallel manifolds.

Then we introduce a canonical foliation for submanifolds of $\RR^n$ whose normal holonomy group acts irreducibly and non-transitively on the unit sphere of the normal space.
In \S~\ref{2.2} we begin with a submanifold $N \subset \RR^n$, take a full holonomy tube $N_{\zeta_p}=:M$ and we assume there is a parallel normal field $\xi$ on $M$ with $\ker A_\xi \neq \{ 0\}$. Then $M$ is foliated by the
holonomy tubes $H^\xi (x)$ around the focal manifold  at infinity  $M_{\tilde \xi}\subset L^{n+2}$. This may seem to depend on the choice of the parallel normal field $\xi$, but in \S~\ref{indep} we show it is not the case. Now, $N$ can be regarded as a focal manifold of $M$, with projection $\pi: M \to N$. In \S~\ref{down} we project down to $N$ the canonical foliation on $M$. The homogeneity of this canonical foliation is finally proven in \S~\ref{homog} provided that the horizontal distribution of a full holonomy tube is covered by kernels of shape operators (Theorem~\ref{main-lemma}).

\subsection{Polar-like properties of the foliation by holonomy tubes} \label{1.6}

 Let $M$ be submanifold of Euclidean space or,
more generally,  a Riemannian submanifold of Lorentzian space. Let $\xi$ be a parallel normal field to $M$ and
assume that $M_\xi$ is a parallel focal manifold to $M$, i.e $1$ is an eigenvalue, with constant multiplicity,
of the shape operator $A_\xi$. As we have observed in the previous subsection, $M$ is foliated by the holonomy
tubes $(M_\xi)_{x- \pi (x)} = (M_\xi)_{-\xi(x)} $, $x\in M$, that we assume are all of the same dimension.
%

Let $\tilde \nu$ be the distribution in $M$ which is perpendicular to the tangent spaces of the holonomy tubes.
Observe that the restriction of $\tilde \nu$ to any fiber $S(x) = \pi^{-1}(\pi (x))$ coincides with the
distribution given by the normal spaces to the orbits of the normal holonomy group $\Phi^\perp_{\pi (x)}$ in
$S(x)$.  But this action must be locally polar (see the end of \S~\ref{1.1}). Then the normal spaces to
the orbits is an autoparallel distribution. This shows that $\tilde \nu$ is autoparallel, since the fibers
$S(x)$ are totally geodesic.

Let us examine the construction of  the integral manifolds $\Sigma (q)$ of $\tilde \nu$ more closely. This construction is implicit in the proof of [Lemma 2.6]{OW}).  The main point is that the restriction, to an invariant submanifold, of a locally polar action is again locally polar [Lemma 2.6]{OW}. Indeed,
\begin{equation}\label{(*)}
\Sigma (q) =  S(q) \  \cap \   q + \nu _{-\xi (q)}\big( \, \Phi ^\perp_{\pi (q)}.(-\xi (q)\, )\, \big) \end{equation}
 where $\Phi ^\perp $ denotes the normal holonomy group of $M_\xi$
and the normal space to the  holonomy orbit is inside $\nu _{\pi (q)}(M_\xi)$. Observe that the above expression
is independent of $x$ in a given $\Sigma (q)$,  and shows that $\Sigma (q)$ is totally geodesic.

If $x\in \Sigma (q)$ then, by (\ref{(*)}), $(x-q)$ belongs to the normal space, in $\nu _{\pi (q)}(M_\xi)$, of the
holonomy orbit $\Phi ^\perp_{\pi (q)}.(-\xi (q)\, )$. This  orbit has the same dimension as its nearby orbit
$\Phi ^\perp_{\pi (q)}.(-\xi (x)\, )$ (note that $\pi (x) = \pi (q)$). This implies that $(x-q)$ is a fixed
vector of the slice representation of the isotropy subgroup $(\Phi ^\perp_{\pi (q)})_{-\xi (q)}$ of $\Phi ^\perp_{\pi (q)}$ at $-\xi(q)$. Hence the extension $\eta$ of $(x-q)$  to a $\Phi ^\perp_{\pi (q)}$-invariant normal field to $\Phi ^\perp_{\pi (q)} (-\xi (q))$, where $\eta (q) = x-q$,  is parallel in the
normal connection of the orbit, regarded as a  submanifold of $\nu _{\pi (q)}(M_\xi)$ (see Proposition 2.4 of
\cite {OW} and Proposition 3.2.4 of \cite {BCO}). Observe that this orbit could be non-principal in the ambient
space. Since $x \in \Sigma (q)$ is arbitrary we obtain that
$$ -q + \Sigma (q) \subset \nu _0 \big( \Phi ^\perp
_{\pi (q)}  (-\xi (q))\big )\, $$
(recall that $\nu _0$ is the maximal parallel and flat subbundle of $\nu$).
By the above construction we have that the normal parallel transport, along any curve in $\Phi ^\perp_{\pi (q)} (-\xi
(q))$, from $q$ to $q'$, maps $-q + \Sigma (q)$ into $-q' + \Sigma (q')$. It is standard to prove and well-known
(see \cite {HOT}) that a parallel and $\Phi ^\perp_{\pi (q)}$-invariant normal field to the  holonomy orbit $\Phi
^\perp_{\pi (q)} (-\xi (q))$ extends to parallel normal field $\eta$ of the holonomy tube $H(q): =
(M_\xi)_{-\xi(q)}$ (we require that $\eta$ be both parallel and $\Phi ^\perp_{\pi (q)} (\-\xi (q))$-invariant
since the holonomy orbit could be non-full). Then, $$ -q + \Sigma (q) \subset \nu _0 \left( H(q) \right)\, .$$
Moreover,  the sets $-x + \Sigma (x)$ move parallel with respect to the normal connection of $H(q)$, $x\in H(q)$. This
implies that its tangent spaces give rise to a parallel and flat subbundle of the normal bundle $\nu
\big(H(q)\big)$ in the ambient space. That is, the restriction to $H(q)$  of $\tilde \nu$ is a parallel and flat
subbundle of $\nu \big(H(q)\big)$.

Let  $x\in \Sigma (q)$ and  let  $\eta $ be the  parallel  normal field to $H(q)$ with $\eta (q) = x-q$. Then
observe that $H(x) = (H(q))_\eta$, i.e. the different holonomy tubes inside $M$ are parallel manifolds.

We can now prove that $\tilde \nu$ is a $A^M$-invariant distribution of $M$. Indeed, let $X, \, Y$ be vector fields on
$M$, where $X$ is tangent to the holonomy tubes and $Y$ is perpendicular, i.e. $Y$ lies in $\tilde \nu$. The
 Euclidean derivative $(\nabla ^E _X Y)_x \in T_xH(x)\oplus \tilde \nu _x$, since $\tilde \nu _{|\, H(x)}$ is a
 parallel subbundle of the normal bundle in the ambient space. Then it has no normal component to $M$.
 Thus $\alpha ^M (\tilde \nu ,\tilde \nu ^\perp) =0$ and therefore $\tilde \nu$ is $A^M$-invariant.

We summarize what we have proven in the following:

\begin{proposition}\label{slice}
Let $M$ be a Euclidean submanifold or, more generally, a Riemannian submanifold of Lorentz
space. Let $\xi$ be a parallel normal field to $M$, with a constant eigenvalue $1$ with constant multiplicity.
For any $x\in M$, we denote by $H(x) \subset M$ the  holonomy tube $(M_\xi)_{-\xi (x)}$ of the focal manifold $M_\xi$ and we assume that all $H(x)$ have the same dimension.
Let $\tilde \nu$ be the distribution in $M$ which is
perpendicular to the family of holonomy tubes. Then,
\begin{enumerate}
\item [(i)] $\tilde \nu$ is
autoparallel and invariant under all shape operator of $M$. Moreover, if $\Sigma (x)$ is a leaf of $\tilde \nu$
through $x$, then $$\Sigma (x) = (x+\nu_x H(x)) \cap M\, .$$
\item [(ii)] The leaves  $\Sigma (q)$ are invariant under the parallel transport in the normal bundle of the
focal manifold $M_\xi$. That is, if $c$ is a curve in $M_\xi$ from $\pi (x)$ to $\pi (y)$ then
$$\tau ^\perp _c (\Sigma(x)) = \Sigma(y)\, .$$
\item [(iii)] The restriction of $\tilde \nu$ to any $H(x)$ is a parallel (and flat) subbundle of $\nu _0H(x)$.
Moreover,
$$ \Sigma (x) \subset x + (\nu _0 H(x))_x$$ and  $\Sigma (y)$ moves parallel, in the normal connection of the holonomy
tube $H(x)$. That is, if $c$ is a curve in $H(x)$ from $y$ to $z$, then
$$\tau ^\perp _c (\Sigma(y)) = \Sigma(z)\, .$$
\item [(iv)] Let $x\in \Sigma (q)$ and  identify   $(x-q) $ with  the  parallel  normal field to $H(q)$ with this
initial condition at $q$. Then  $H(x) = (H(q))_{{x-q}}$.
\end{enumerate}
\end{proposition}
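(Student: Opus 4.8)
The plan is to work fibrewise over the focal manifold $M_\xi$ and to exploit the local polarity of the normal holonomy action. Recall that the fibres $S(q)=\pi^{-1}(\pi(q))$ of $\pi\colon M\to M_\xi$ are the integral manifolds of $\ve^\pi=\ker(\Id-A_\xi)$; they are totally geodesic in $M$ and coincide with the normal holonomy orbits $\pi(q)+\Phi^\perp_{\pi(q)}.(-\xi(q))$ sitting inside the affine normal space $\pi(q)+\nu_{\pi(q)}(M_\xi)$. Since $T_qH(q)=\Cal H^\pi_q\oplus T_q(\text{orbit})$ while $T_qM=\ve^\pi_q\oplus\Cal H^\pi_q$, the distribution $\tilde\nu$ is tangent to the fibres and, on each $S(q)$, is exactly the distribution normal to the holonomy orbits. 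By the Normal Holonomy Theorem \cite{O1} together with the polarity of $s$-representations \cite{PT} (and its Lorentzian counterpart \cite{OW}), this action is locally polar, so the orbit-normal distribution is autoparallel in $S(q)$; because the fibres are totally geodesic in $M$ and the leaves of $\tilde\nu$ lie inside them, this yields at once that $\tilde\nu$ is autoparallel in $M$. Writing the leaf as in the slice formula~(\ref{(*)}), namely $\Sigma(q)=S(q)\cap\big(q+\nu_{-\xi(q)}(\Phi^\perp_{\pi(q)}.(-\xi(q)))\big)$, and using that the normal parallel transport of $M_\xi$ carries fibres to fibres and holonomy orbits with their normal slices to holonomy orbits, I obtain the invariance in (ii), $\tau^\perp_c(\Sigma(x))=\Sigma(y)$.

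The heart of the argument is (iii). For $x\in\Sigma(q)$ the vector $x-q$ lies in the normal space to the holonomy orbit through $-\xi(q)$; since this orbit and the neighbouring orbit through $-\xi(x)$ have the same dimension, $x-q$ is a fixed vector of the slice representation of the isotropy group $(\Phi^\perp_{\pi(q)})_{-\xi(q)}$. Hence it extends to a $\Phi^\perp_{\pi(q)}$-invariant normal field that is parallel in the normal connection of the orbit (Proposition~2.4 of \cite{OW}, Proposition~3.2.4 of \cite{BCO}), and such a field in turn extends to a parallel normal field of the whole holonomy tube $H(q)$ \cite{HOT}. Letting $x$ vary shows $-q+\Sigma(q)\subset(\nu_0H(q))_q$ and that the family $\{-x+\Sigma(x)\}_{x\in H(q)}$ is $\nabla^\perp$-parallel along $H(q)$; equivalently $\tilde\nu|_{H(q)}$ is a parallel flat subbundle of $\nu_0H(q)$, and normal parallel transport along curves in $H(q)$ permutes the leaves $\Sigma$, which is precisely (iii).

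With (iii) in hand I close the remaining parts of (i). The inclusion $\Sigma(q)\subset q+\nu_qH(q)$ is immediate from $-q+\Sigma(q)\subset(\nu_0H(q))_q$; since $T_qH(q)\oplus\tilde\nu_q=T_qM$, the intersection $(q+\nu_qH(q))\cap M$ is a submanifold of dimension $\dim\tilde\nu_q=\dim\Sigma(q)$, so a dimension count forces the slice equality $\Sigma(q)=(q+\nu_qH(q))\cap M$. For the invariance under the shape operator, take $Y\in\tilde\nu$ and $X$ tangent to $H(q)$; because $\tilde\nu|_{H(q)}$ is a parallel subbundle of the ambient normal bundle, $(\nabla^E_XY)_q\in T_qH(q)\oplus\tilde\nu_q\subset T_qM$ has no component normal to $M$, whence $\alpha^M(\tilde\nu,\tilde\nu^\perp)=0$ and $\tilde\nu$ is invariant under every shape operator of $M$.

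Finally, (iv) follows by identifying $x-q$ with the parallel normal field $\eta$ to $H(q)$ it determines: since the leaves $\Sigma$ move $\nabla^\perp$-parallel along $H(q)$, the parallel manifold $(H(q))_\eta=\{y+\eta(y):y\in H(q)\}$ sweeps out precisely $H(x)$, so $H(x)=(H(q))_{x-q}$ and the holonomy tubes inside $M$ are mutually parallel manifolds. I expect the main obstacle to be the slice step in (iii): one must justify, for a possibly non-principal and non-full holonomy orbit, that a fixed vector of the slice representation extends to a parallel $\Phi^\perp$-invariant normal field and then to the holonomy tube, and one must check that the Lorentzian versions of local polarity and of this extension (from \cite{OW}) apply verbatim when $M$ is a Riemannian submanifold of Lorentz space.
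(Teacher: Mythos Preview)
Your proof is correct and follows essentially the same approach as the paper's argument in \S\ref{1.6}: you use local polarity of the holonomy action on the fibres to get autoparallelism of $\tilde\nu$, the slice formula~(\ref{(*)}), the fixed-vector-of-the-slice-representation argument (with the same references to \cite{OW,BCO,HOT}) to obtain the parallel extension to $\nu_0H(q)$, and the identical computation $\nabla^E_XY\in T_qH(q)\oplus\tilde\nu_q$ for the $A^M$-invariance. The only addition is your dimension count for the slice equality $\Sigma(q)=(q+\nu_qH(q))\cap M$, which the paper leaves implicit; your concern about the non-full orbit case is exactly the point the paper flags when it requires the extended field to be both parallel and $\Phi^\perp$-invariant.
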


\subsection{Nearby parallel manifolds}\label{2.1}

Let $M \subset \RR^n$ be a submanifold with a parallel normal field $\xi$ such that the eigenvalues of the
shape operator $A_\xi$ have constant multiplicities. Let $0,\,  \lambda _1 < \cdots, < \lambda _g$ be the
different eigenvalue functions with associated eigendistributions $E_0 ,  \cdots , E_g$.  Let $\eta$ be another
parallel normal field to $M$, such that $1$ is not an eigenvalue of $A_\eta$. Consider  the non-focal parallel
manifold $M_\eta$.

Assume that the eigenvalue functions  of $A^{M_\eta }_\xi$ are $0, \, \bar \lambda _1 <
\cdots, < \bar \lambda _g$, with associated eigendistributions $\bar E_0, \cdots , \bar E_g$, where $\text
{dim}(E_i) = \text {dim}(\bar E_i)$, for all $i= 0, \cdots , g$ (and we assume that the same is true if we
re-scale $\eta$ by a real number $0\leq t \leq 1$). Note that $A_\eta$ must leave  the eigendistributions $E_0 , \cdots ,
E_g$ invariant, since it commutes with $A_\xi$. Thus, in this case, from the tube formulae relating shape operators of
parallel manifolds, $A_\eta$ has only one eigenvalue function, let us say $\beta _i$ in each $E_i$,
$i\geq 1$. Since we are assuming that $\eta $ is small, we have that, for $i=1, \cdots , g$
$$\bar \lambda _i \circ h = \frac {\lambda _i} {1 - \beta _i}\, ,$$
where $h: M\to M_\eta$ is the parallel map, i.e. $h(q)= q + \eta (q)$.

Let $J$ the subset of $\RR$ which consists of the constant eigenvalues of $A_\xi$ and let $J^\eta$ the
analogous subset with respect to $A^{M_\eta}_\xi$. Let now $a \in J\cap J^\eta$. Then, possibly  by re-scaling
$\eta$ (actually, we are assuming  that $a$ belongs to $\cap _tJ^{t\eta}))$ we must have that there is an index
$j$ such that $\lambda _j = \bar \lambda _j \circ h \equiv a$. This implies that $\beta _j \equiv 0$.

\noindent Let now $$I = \{ i :\,  \lambda _i = \bar \lambda _i \circ h \text { and it is constant}\}$$ and $$E =
\underset {j\in I} {\oplus}  E_j$$

Observe that $A_{\eta \, |E} \equiv 0$. So, if $c(t)$ is a curve that lie in $E$, then $\frac {d}{dt}\eta (c(t))
\equiv 0$, since both tangential and normal part of the derivative vanish. So we have proved the following

\begin{lemma}\label{subset} The parallel normal field $\eta $ is constant along $E$.
\end{lemma}

\subsection{The canonical foliation of a full holonomy tube}\label{2.2}

Let $N\subset \RR^n$ be a submanifold of Euclidean space and let  us consider a holonomy tube
$$ M= N_{\zeta _q} $$
 around $N$, where $\zeta _q$ is a principal vector for the
normal holonomy   group $\Phi ^\perp_q$ of $N$ at $q$, and $1$ is not an eigenvalue of  $A^N_{\zeta _q}$. Then
$M$ has flat normal bundle, i.e., it is a full holonomy tube. Let $\pi: M : = N_{\zeta _q} \to N$ be the projection and let $\psi (p) = \pi (p) -p$. Hence, $N = M_\psi$, i.e. the manifold $N$ is a parallel (focal, if $N$ has non-flat
normal bundle) manifold to its holonomy tube.

Let, for $p \in M$, $$S(p) = \pi ^{-1}(\pi (p)) = p + \Phi ^\perp_{\pi (p)}. (p - \pi (p)) \, .$$

Since we are working locally, we may assume that $N$ is simply connected, so its normal holonomy group (and
hence $S(p)$)  is connected.

 For a generic $p \in M$, the common eigenspaces of the shape operators of  $M$ define, in a neighbourhood
 $U$ of $p$,  $C^\infty$ eigendistributions
   $E'_1, \cdots , E'_s$
of $M$, with associated  $C^\infty$ curvature normals  $\eta _1, \cdots , \eta _s$. We assume $U=M$. Observe that $\ker  (
\Id -A^M_\psi) = \ve$, where $\ve$ is the vertical distribution of $M$, i.e. $\ve_x = T_xS(x)$. We have that
$\ve$ is the direct sum of some of the eigendistributions $E'_1, \cdots , E'_s$ (see the remark at page~\pageref{tubolonomico} in \S~\ref{1.5}). Namely, those eigendistributions whose index $i$ verify that $\langle \psi , \eta _i\rangle \equiv 1$. We
may assume that
$$ \ve = E'_1 \oplus \cdots \oplus E'_l
$$ where $l < s$. Observe that the curvature normals $\eta _1, \cdots ,  \eta _\ell$ are parallel, since they are the
extensions of the curvature normals of any fiber $S(x)$, which is an  isoparametric submanifold (see the remark at page ~\pageref{tubolonomico} in \S~\ref{1.5}).

Let   $\xi \neq 0$ be a parallel normal field to $M$ and let, $\ker (A^M_\xi) = E^\xi _0,\  E^\xi _1 ,
\cdots , E^\xi _r$ be the eigendistributions associated to the constant eigenvalues of the shape operator
$A^M_\xi$ (we may have to consider a smaller $M$).  Observe that any $E^\xi _k$ is the sum of some of the
eigendistributions $ E'_1 , \cdots , E'_s $.   Since $\langle \xi , \eta _k \rangle$  is a constant eigenvalue of $A^M_\xi$ (for $0\leq k \leq \ell$) we get  that, if $1\leq i \leq \ell$, then $$E'_i \subset  E^\xi _{j(i)}$$  for some $j(i) \in \{ 1, \cdots , r\}$.
 Observe that in general it could be that $\langle \xi, \eta _i\rangle \equiv  \langle \xi, \eta _j\rangle$ for
 $i\neq j$ and so, in this case,  $E_i\oplus E_j$ is contained in some eigendistribution of $A^M_\xi$.

\begin{assumption} In the sequel of this section we will suppose that the normal holonomy group $\Phi^\perp$ of $N$ acts irreducibly and not transitively on the
 normal space.
\end{assumption}

Therefore \textit{$S(x)$ is an irreducible isoparametric submanifold of the normal space
 $\nu _{\pi (x)} N$} (observe that $N$ must be an irreducible and full submanifold).

 \medskip

Now, \textit{assume further that $0$ is a constant eigenvalue of $A^M_\xi$,i.e. $E^\xi_0$ is non-trivial}. We introduce a \textit{canonical foliation} of $M$, starting from $\xi$, but we will later show it is independent on $\xi$ (\S~\ref{indep}). Recall  $M$ is foliated by the
holonomy tubes $H^\xi (x)$
 around the focal manifold $M_{\tilde \xi}\subset L^{n+2}$, that we assume (possibly in a smaller $M$) are
all of the same dimension (see \S~\ref{1.4} and ~\ref{1.5}). To visualize the holonomy tube let us define the
equivalence relation in $M$,  $x\underset {\xi} {\sim} y$ if there is a curve in $M$ from $x$ to $y$ and
such that it is always perpendicular to the distribution $E^\xi _0$. Then, locally,
$$H^\xi (x) = \{y\in M:\, x\underset {\xi}  {\sim} y\}\, .$$
From the Homogeneous Slice Theorem \cite {HOT} (the local version follows from Theorem 3.1 in \cite {OW}) one
   has that, starting from $x\in M$ and  moving perpendicularly to $E^\xi_0$ one can reach any other point of $S(x)$. Indeed,
   let $\Cal D = \ve \cap E^\xi _0$. One has that $\Cal D \neq \ve$, otherwise $0 = A^M_{\xi  |\ve_x}= A^{S(x)}_{\xi
   (x)}$. Hence the restriction of $\xi$ to $S(x)$ is a parallel normal field whose shape operator is null. Hence
   $S(x)$ is not full in the normal space $\nu _{\pi (x)}M$. A contradiction.
 Now observe that $\Cal D = \ker  (\Id - A^{S(x)}_{\vartheta})$, where $\vartheta = \psi _{|S(x)} - \xi
 _{|S(x)}$. Then, beginning with a point $x \in M$,  moving perpendicularly to $\Cal D$,
 but remaining inside $S(x)$, we reach any other point of $S(x)$, by the Homogeneous Slice Theorem. So, moving
 perpendicularly to $E^\xi _0$, starting at $x$, we reach any point in $S(x)$.

 So we have proven that
\begin{equation}\label{(**)}S(x) \subset H^\xi (x)\, . \end{equation}
 Let now $\tilde \nu ^\xi$ be the normal space to the foliation of $M$ by the holonomy tubes $H^\xi (x)$ and let
 us denote by $\Sigma _\xi (x)$ the totally geodesic leaves of $\tilde \nu ^\xi$. By
 the Proposition~\ref{slice} in \S~\ref{1.6}, if $ x\in \Sigma _\xi (q)$,
$$ H^\xi (x) = (H^\xi (q))_\varsigma \, , $$
 where $\varsigma$ is the parallel normal field to $H^\xi(q)$ with $\varsigma (q) = x - q$.

\begin{remark}\label{rem4}
(i) Observe that $H^\xi (q)$ has
 flat normal bundle. Indeed,  $\nu H^\xi(q) = \tilde \nu ^\xi_{\,|
 H^\xi (q)} \oplus \nu M _{\, |H^\xi (q)}$ and both subbundles are parallel and flat (see  Proposition~\ref{slice} in \S~\ref{1.6}).
\newline
 (ii) By (\ref{(**)}),  the restrictions of the parallel curvature normals $\eta _1 , \cdots , \eta _\ell$ of $M$ to
 any holonomy tube
 $H^\xi (q)$ are parallel curvature normals of this tube. The associated eigendistributions are just the
 restriction to $H^\xi (q)$ of the corresponding eigendistributions $E'_1 , \cdots , E'_\ell$ of $M$ (see Lemma~\ref{curvnorm} in \S~\ref{1.2}).
\end{remark}

We continue with the assumptions before the above remark.

 Since $\tilde \nu ^ \xi \subset E^ \xi _0 = \ker (A^M_\xi)$ we have that $\xi$ is constant along $\Sigma
 _\xi (q)$, in the ambient space. So,
 $$\xi (q) = \xi (x)$$
 as vectors of the ambient space ($x\in\Sigma
 _\xi (q)$). The same is true for any point in $H^\xi (q)$, i.e.
 $$\xi (q') = \xi (q'+ \varsigma (q'))$$
for all $q' \in H^\xi (q)$.

 We can now apply  Lemma~\ref{subset}, since the shape operators
 $A^{(H^\xi(q))_\varsigma }_\xi$ and $A^{H^\xi(q)}_\xi$ share the same constant eigenvalues
$\langle \xi , \eta _1\rangle, \cdots , \langle \xi , \eta _\ell\rangle$. So we conclude that $\varsigma$ must be
constant, in the ambient space, along any fiber $S(q')$, $q'\in H^\xi(q)$. Then, by Proposition~\ref{slice} in \S~\ref{1.6}, we get that the sets $\Sigma _\xi (y)$ are constant (i.e., differ by a translation) if $y$ moves
in  $S(q')$, for all $q'\in H^\xi(q)$ (of course locally).

\subsection{Independence of the foliation on the parallel normal field}\label{indep}

Let us decompose
$$ \tilde \nu ^\xi =    \tilde \nu ^\xi _1 \oplus \cdots \oplus  \tilde \nu ^\xi _t$$
into  different eigendistributions of the family of shape operators of $M$, restricted to $\tilde \nu ^\xi$
(perhaps in smaller $M$). Let $\eta _{h(1)}, \cdots \eta _{h(t)}$ be the associated curvature normals, i.e.
$A^M_{\mu \, |\, \tilde \nu ^\xi _i} =  \langle \mu , \eta_{h(i)}\rangle \Id _{\tilde \nu ^\xi _ i}$. Observe that
the eigendistribution   $E'_{h(i)}$ contains    $\tilde \nu ^\xi _i$ and there is no reason for the equality.
Since $\tilde \nu ^\xi$ is autoparallel and $A^M$-invariant, one has that the restriction $\eta _{h(i) |\,
\Sigma _\xi (y)}$ is a curvature normal of $\Sigma _\xi (y)$, for all $y\in M$.

\medskip

Since the integral manifolds $\Sigma _\xi (y)$ are constant along $S(x)$, $y\in S(x)$, we must have that $\eta
_{h(i)|S(x)}$ is a constant normal vector field to $S(x)$ (regarded as a full submanifold of   $\nu _{\pi
(x)}N$). Then $\eta _{j (i)} = 0$ and  so $\Sigma _\xi (x)$ is totally geodesic in the ambient space for all $x
\in M$ (an hence an open subset of an affine subspace). This shows, since $\tilde \nu ^\xi $ is $A^M$-invariant,
that $\tilde \nu ^\xi$ is contained in the nullity of the second fundamental form $\alpha ^M$. Or equivalently,
$$ \tilde \nu ^\xi \subset \underset {\eta \in \nu M}  {\bigcap}\ker  A _\eta\, .$$
In particular, if $\xi '$ is any other given parallel field to $M$ with $0$ as constant eiegenvalue of $A^M_{\xi'}$  (possibly in a smaller $M$) making the same constructions for $\xi '$, one has  that
$$\tilde \nu ^\xi \subset E^{\xi '}_0 \, .$$
Since the distribution $(\tilde \nu ^\xi)^\perp$ is integrable (the integral manifolds are $H^\xi (x)$) one has
that locally
$$H^{\xi '}  (x) \subset \ H^\xi (x)$$
for all $x\in M$ (recall that $H^{\xi ' }$ is obtained by moving perpendicularly to $E^{\xi ' }$). But in the same way we must have the other
inclusion. So, locally,
$$H^\xi   (x) = \ H^{\xi '}  (x)$$
or equivalently
$$ \tilde \nu ^ \xi  = \tilde \nu ^{\xi '}\, .$$

\begin{remark}\label{horiz}
$\tilde \nu ^\xi$ is horizontal with respect to $\pi$, i.e. $\tilde \nu  ^\xi \subset \ve
^\perp$.  This follows immediately from the fact that $S(x)\subset H^\xi (x)$.
\end{remark}

\subsection{Projecting down the foliation}\label{down}

Observe that $x'-x$ belongs to $\Sigma _\xi (x)$, for all $x'\in \Sigma _\xi (x)$ since this submanifold is
totally geodesic in the ambient space.

In this way $M$ can be locally written as the union of  parallel manifolds to $H^\xi (x)$
$$ M = \underset {x'\in \Sigma _\xi (x)} \bigcup (H^\xi (x))_{x'- x} \text { \  \ \ \  \ \ \ (locally)}$$
where $(x'- x)$ is identified with a parallel normal field along $H^\xi (x)$, with this initial condition at
$x$.

 It is standard to prove, since $\Sigma _\xi (x')$ is locally constant, for $x' \in S(x)$,
that $\tilde \nu ^\xi$ projects down to an autoparallel  distribution $\pi (\tilde \nu ^\xi)$ of $N$ which is
contained in the nullity of the second fundamental form $\alpha ^N$ of $N$. The integral manifolds are $\pi
(\Sigma _\xi (x))$, which are open subsets of affine subspaces of the ambient space. The complementary
distribution is integrable with $A^N$-invariant leaves given by $\pi (H^\xi (x))$. Moreover, the restriction of
$\pi (\tilde \nu ^\xi)$ to $\pi (H^\xi (x))$ is a parallel and flat subbundle of the normal space $\nu (\pi
(H^\xi (x)))$, in the ambient space. Namely, if $x \in \Sigma _\xi (q)$, $x-q$ can be extended to a parallel normal field $\eta$ to $H^\xi (q)$ which we have seen to be constant on $S(q)$.  Then it projects down to a parallel normal field of $\pi ( H^\xi (q))$. We also obtain that
$$ N = \underset {y\in \pi (\Sigma _\xi (x))} \bigcup (\pi (H^\xi (x)))_{y- \pi (x)} \text { \  \ \ \  \ \ \ (locally)}$$

 \medskip

\begin{lemma}\label{hol-tubo}
\begin{enumerate}
\item[(i)] The normal holonomy $\Phi ^\perp _H $ of $\pi (H^\xi (x))$ at $\pi (x)$,  restricted to the invariant subspace $\nu _{\pi (x)} N$, coincides with the normal holonomy group $\Phi ^\perp _N$ of $N$ at $\pi (x)$.
\item[(ii)] $\nu _0 (\pi (H^\xi (x))) = (\pi (\tilde \nu ^\xi))_{\, | \, \pi (H^\xi (x))}$.
\end{enumerate}
\end{lemma}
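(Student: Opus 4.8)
The plan is to reduce everything to part (i), from which part (ii) will follow quickly using the irreducibility hypothesis. Throughout I work at the base point $\pi(x) \in \pi(H^\xi(x)) \subset N$ and use the facts established in \S\ref{down}: the leaf $\pi(H^\xi(x))$ is $A^N$-invariant, the distribution $\pi(\tilde\nu^\xi)$ is autoparallel and contained in the nullity of $\alpha^N$, its restriction to $\pi(H^\xi(x))$ is a parallel and flat subbundle of $\nu(\pi(H^\xi(x)))$, and $N$ is locally the union of the parallel manifolds $(\pi(H^\xi(x)))_{y-\pi(x)}$. First I would record the normal-space decomposition. Since $\pi(H^\xi(x))$ is $A^N$-invariant, $\nu N|_{\pi(H^\xi(x))}$ is a parallel subbundle of $\nu(\pi(H^\xi(x)))$ and, as a standard consequence (cf. \S\ref{1.1}), the normal connection of $\pi(H^\xi(x))$ restricted to it coincides with that of $N$. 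Combined with the splitting $T_{\pi(x)}N = T_{\pi(x)}(\pi(H^\xi(x))) \oplus (\pi(\tilde\nu^\xi))_{\pi(x)}$ of \S\ref{down}, this yields the orthogonal decomposition
$$\nu_{\pi(x)}(\pi(H^\xi(x))) = (\pi(\tilde\nu^\xi))_{\pi(x)} \oplus \nu_{\pi(x)}N,$$
with both summands invariant under the normal holonomy $\Phi^\perp_H$; in particular this makes sense of ``the invariant subspace $\nu_{\pi(x)}N$'' in the statement.

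For part (i) one inclusion is immediate: every loop in $\pi(H^\xi(x))$ is a loop in $N$, and since the two normal connections agree on $\nu N$, the $\Phi^\perp_H$-parallel transport restricted to $\nu N$ equals the $\nabla^\perp$-parallel transport of $N$, whence $\Phi^\perp_H|_{\nu N} \subseteq \Phi^\perp_N$. For the reverse inclusion I would exploit that $\pi(\tilde\nu^\xi)$ lies in the nullity of $\alpha^N$: by the Ricci equation, if $X \in \pi(\tilde\nu^\xi)$ then $A^N_\eta X = 0$ for all $\eta$, so $R^\perp_{X,Y}\equiv 0$ and the normal curvature of $N$ is supported on $\Lambda^2\,T(\pi(H^\xi(x)))$. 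Since $\pi(\tilde\nu^\xi)$ is autoparallel with flat, totally geodesic affine leaves, and $N$ is the union of the parallel manifolds to $\pi(H^\xi(x))$ along the flat normal fields spanning $\pi(\tilde\nu^\xi)$, the Ambrose--Singer description of the holonomy algebra should show that every generator $\tau^{-1}R^\perp_{X,Y}\tau$ of $\Phi^\perp_N$ can be realized with both the curve $\tau$ and the vectors $X,Y$ lying inside the single leaf $\pi(H^\xi(x))$. This gives $\Phi^\perp_N \subseteq \Phi^\perp_H|_{\nu N}$, completing (i).

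This last reduction is where I expect the main obstacle to lie: one must verify that transport along the nullity directions, which connect the different parallel leaves making up $N$, contributes no new holonomy on $\nu N$. This is exactly what the flatness of $\pi(\tilde\nu^\xi)$ and the parallel-manifold structure of \S\ref{down} are designed to guarantee, but the argument has to be made carefully rather than by the naive ``same orbit'' comparison of holonomy orbits, which on its own does not force equality of the groups.

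Finally, part (ii) follows from (i) together with the Assumption. Working locally on the simply connected $\pi(H^\xi(x))$, the fibre of $\nu_0$ at $\pi(x)$ is precisely the set of vectors fixed by the restricted normal holonomy $\Phi^\perp_H$, since $\nu_0$ is spanned by the parallel normal fields (cf. \S\ref{1.1}). In the invariant decomposition above, $\Phi^\perp_H$ fixes every vector of $(\pi(\tilde\nu^\xi))_{\pi(x)}$, this subbundle being parallel and flat by \S\ref{down}, while on $\nu_{\pi(x)}N$ it acts, by (i), as $\Phi^\perp_N$, which under the Assumption is a nontrivial group acting irreducibly and hence has no nonzero fixed vector. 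Therefore the fixed set of $\Phi^\perp_H$ equals $(\pi(\tilde\nu^\xi))_{\pi(x)}$, that is $\nu_0(\pi(H^\xi(x))) = (\pi(\tilde\nu^\xi))|_{\pi(H^\xi(x))}$, as claimed.
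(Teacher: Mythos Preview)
Your setup, the proof of part (ii), and the easy inclusion $\Phi^\perp_H|_{\nu N}\subseteq \Phi^\perp_N$ in part (i) are correct and essentially identical to the paper's argument.

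The difference is in the reverse inclusion of (i). You propose to use Ambrose--Singer and argue that every generator $\tau^{-1}R^\perp_{X,Y}\tau$ of the holonomy algebra of $N$ can be realized with the path and the vectors inside the single leaf $\pi(H^\xi(x))$. You correctly flag this step as the obstacle and leave it unfinished: the vanishing of $R^\perp$ on $\pi(\tilde\nu^\xi)$ lets you take $X,Y$ tangent to the leaf \emph{through the endpoint of $\tau$}, but that endpoint need not lie on $\pi(H^\xi(x))$, and the conjugating transport $\tau$ still runs through $N$, not through the leaf. So as written the Ambrose--Singer reduction is a hope, not a proof.

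The paper bypasses Ambrose--Singer entirely and works directly with parallel transport along loops. Since $\pi(\tilde\nu^\xi)$ lies in the nullity of $\alpha^N$ (hence in the nullity of $R^\perp$ by Ricci) and since \emph{both} $\pi(\tilde\nu^\xi)$ and its orthogonal complement are integrable, the lemma in the Appendix of \cite{O2} applies: for any short loop $c$ in $N$ at $\pi(x)$ one has $\tau^\perp_c=\tau^\perp_{c_2}\circ\tau^\perp_{c_1}$ with $c_1$ a loop in $\pi(H^\xi(x))$ and $c_2$ a loop in $\pi(\Sigma_\xi(x))$. Along the nullity leaf the normal space of $N$ is constant in the ambient space, so $\tau^\perp_{c_2}=\Id$, and therefore $\tau^\perp_c=\tau^\perp_{c_1}\in\Phi^\perp_H|_{\nu N}$. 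This is exactly the ``transport along nullity directions contributes no holonomy'' statement you identified as missing; the paper proves it in one line rather than via curvature generators. If you want to rescue your Ambrose--Singer approach, you would end up needing the same loop-factorization lemma anyway, so the paper's route is both shorter and complete.
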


\begin{proof} The inclusion in part (i) of the first group into the second is clear. Let us prove
the other inclusion. Since the distribution $\pi (\tilde \nu ^\xi)$ of $N$ is inside the nullity of $\alpha ^N$, by the Ricci identity, it is in the nullity of the normal curvature tensor $R^\perp$ of $N$ and
in particular $R^\perp _{X,Y} =0$ if $X$ lies in $\pi (\tilde \nu ^\xi)$ and $Y$ in the perpendicular (integrable)
distribution. Then, if $c$ is a curve in $N$ the parallel transport $\tau ^\perp _c$ coincides with $\tau ^\perp
_{c_2} \circ \tau ^\perp _{c_1}$, where $c_1$ is a curve which lies in $\pi (H^\xi (x))$ and $c_2$ lies in $\pi
(\Sigma _\xi (x))$ (see the lemma in the Appendix of  \cite {O2}). Both curves $c_1$ and $c_2$ are loops, if $c$ is short
(because in our situation we have two integrable distribution). But $\tau ^\perp_{c_2} = \Id$, since the normal
space of $N$ is constant along any curve in the nullity of $\alpha ^N$. This shows the other inclusion.

Part (ii) follows from the fact that
$$ \nu (\pi (H^\xi (x))) = (\pi (\tilde \nu ^\xi))_{\, | \, \pi (H^\xi (x))} \oplus (\nu N)_{\, |\, \pi (H^ \xi
(x))}$$ and that the first subbundle of this sum is parallel and flat (recall that the normal holonomy group of $N$ acts irreducibly).
\end{proof}

\subsection{Homogeneity of the canonical foliation}\label{homog}

 We come back to the foliation $x\mapsto H^\xi (x)$ in the principal holonomy tube $M = (N)_{\eta _p}$. Let $\xi
 '$ be another parallel normal field to $M$. We have seen, perhaps in a smaller $M$, that
$$H^\xi (x) = H^ {\xi '} (x)\, ,$$
for all $x \in M$.

Let $\Cal H$ be the distribution in $M$ perpendicular to the vertical distribution (with respect to $\pi: M\to
N$), i.e. the distribution perpendicular to the leaves $$p \mapsto S(p) =  p + \Phi ^\perp_{\pi (p)}. (p - \pi
(p)) \, .$$

We are around a generic point such  that $(\ker A^M_{\xi} + \, \ker A^M_{\xi '})$ is a distribution
of $M$.

\begin{proposition}\label{isoparam}
Assume that  $\Cal H \subset(\ker  A^M_{\xi} +  \ker A^M_{\xi '}) $. Then, for
all $x\in M$,  $H^\xi (x)= H^{\xi '}(x)$ is an isoparametric submanifold of $\RR^n$.
\end{proposition}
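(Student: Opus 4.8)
The plan is to use the standard characterization that a submanifold with flat normal bundle is isoparametric exactly when every one of its curvature normals is parallel in the normal connection (equivalently, when $A_\zeta$ has constant eigenvalues for each parallel normal field $\zeta$). Write $H:=H^\xi(x)=H^{\xi'}(x)$, which has flat normal bundle by Remark~\ref{rem4}(i); moreover $\nu H=\nu M_{\,|H}\oplus\tilde\nu^\xi_{\,|H}$, and both summands are spanned by parallel normal fields of $H$: the first because the full holonomy tube $M=N_{\eta_p}$ has globally flat normal bundle, so each parallel normal field of $M$ restricts to one of $H$ (here one uses that $A^M_\zeta$ preserves $TH=(\tilde\nu^\xi)^\perp$, by the $A^M$-invariance of $\tilde\nu^\xi$ in Proposition~\ref{slice}(i), whence $A^H_\zeta=A^M_{\zeta\,|TH}$ and $\nabla^{\perp H}\zeta=0$); the second by Proposition~\ref{slice}(iii). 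Thus it suffices to show that every eigendistribution of the shape operators of $H$ carries a parallel curvature normal, and for this I would exhibit each such eigendistribution as tangent to the fibre of one of three holonomy-tube fibrations of $H$ and then invoke Remark~\ref{tubolonomico}.

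Every eigendistribution of $H$ lies inside $E'_i\cap TH$ for some common eigenspace $E'_i$ of the shape operators of $M$ (in the $\nu M$-directions $A^H_\zeta=A^M_{\zeta\,|TH}$, while the $\tilde\nu^\xi$-directions can only refine this splitting). Split $TH=\ve\oplus\mathcal W$ with $\mathcal W:=\Cal H\cap TH$. The vertical part $\ve$ is tangent to the fibre $S(x)$, which lies in $H$ by (\ref{(**)}); the associated eigendistributions are $E'_1,\dots,E'_\ell$ and their curvature normals $\eta_1,\dots,\eta_\ell$ are parallel by Remark~\ref{rem4}(ii). So only the horizontal eigendistributions, those contained in $\mathcal W$, remain to be treated.

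Here I would use the hypothesis. Both $\Cal H=E'_{\ell+1}\oplus\cdots\oplus E'_s$ and $\ker A^M_\xi+\ker A^M_{\xi'}$ are direct sums of the common eigenspaces $E'_j$, so $\Cal H\subset\ker A^M_\xi+\ker A^M_{\xi'}$ forces each horizontal $E'_i$ into $\ker A^M_\xi$ or into $\ker A^M_{\xi'}$. Recall from \S\ref{1.4} that the fibres of the focalization $M\to M_{\tilde\xi}$ are the integral manifolds of $\ker A^M_\xi$; hence the fibre of the holonomy tube $H\to M_{\tilde\xi}$ through a point has tangent space $\ker A^M_\xi\cap TH$, and it is an isoparametric normal-holonomy orbit of $M_{\tilde\xi}$ (by Lemma~\ref{full_hol-tube}, since $H$ has flat normal bundle). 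Therefore a horizontal eigendistribution contained in $\ker A^M_\xi$ is tangent to this isoparametric fibre, and by Remark~\ref{tubolonomico} its curvature normal, as a curvature normal of $H$, is the restriction of a parallel curvature normal of $H$. The symmetric argument with $\xi'$, using $H=H^{\xi'}$ and the focalization $H\to M_{\tilde\xi'}$, deals with the eigendistributions contained in $\ker A^M_{\xi'}$. Combining the three cases, every curvature normal of $H$ is parallel, so $A^H_\zeta$ has constant eigenvalues for every parallel normal field $\zeta$ and $H$ is isoparametric in $\RR^n$.

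The main obstacle is the step carried out at infinity: one must make sure that the focalization $H\to M_{\tilde\xi}$ (performed in the Lorentz space $L^{n+2}$, since $0$ is an eigenvalue of $A^M_\xi$) really has $\ker A^M_\xi\cap TH$ as fibre direction and that its fibres are genuine isoparametric holonomy orbits to which Remark~\ref{tubolonomico} applies verbatim. This is precisely where the Lorentzian machinery of \S\ref{1.4}--\S\ref{1.5} has to be used carefully, together with Lemma~\ref{curvnorm} to match the curvature normals of $M_{\tilde\xi}$, of the fibre, and of $H$. Once these identifications are secured, Remark~\ref{tubolonomico} delivers the full (not merely the $\nu M$-) curvature normal of $H$ on each horizontal eigendistribution as a parallel one, and the Ricci identity (flatness of $\nu H$ makes all shape operators of $H$ commute) guarantees that the three families of parallel curvature normals are mutually compatible, completing the proof that $H$ is isoparametric.
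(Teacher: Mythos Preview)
Your argument is correct and follows essentially the same route as the paper's proof: first observe that $H$ has flat normal bundle, then show that every curvature normal of $H$ arises from one of three isoparametric fibres---the fibre $S(x)$ of $\pi:M\to N$, the fibre of the focalization at infinity $\pi^\xi:M\to M_{\tilde\xi}$, or the fibre of $\pi^{\xi'}:M\to M_{\tilde\xi'}$---and conclude via Remark~\ref{tubolonomico} that each curvature normal is parallel. Your write-up is more explicit than the paper's (in particular you spell out why each horizontal $E'_i$ lands in $\ker A^M_\xi$ or $\ker A^M_{\xi'}$, and you flag the Lorentzian subtlety), but the strategy and the key references are identical.
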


\begin{proof} Observe that $H^\xi (x)= H^{\xi '}(x)$ has flat normal bundle, for all $x \in M$. Indeed,
$$ \nu (H^ \xi (x)) = (\tilde \nu ^ \xi)_{\, | \, H^ \xi (x)} \oplus (\nu M)_{\, |\, H^ \xi
(x)}$$  and both subbundles are parallel and flat (see part (iii) of Proposition~\ref{slice} in \S~\ref{1.6}). From the assumptions, one
obtains that any curvature normal $\bar \eta$ of $H^\xi (x)= H^{\xi '}(x)$ is obtained by one of the following
(non-exclusive) possibilities:
\begin{enumerate}
\item[(a)] $\bar \eta$ is the extension of a curvature normal of a  (isoparametric) fiber $S(x)$ of $\pi : M \to N$.
\item[(b)] $\bar \eta$ is the extension of a curvature normal of a  (isoparametric) fiber of the focalization at
infinity $\pi ^\xi : M \to M_{\tilde \xi}$.
\item[(c)]  $\bar \eta$ is the extension of a curvature normal of a  (isoparametric) fiber of the focalization at
infinity $\pi ^{\xi '} : M \to M_{\tilde {\xi '}}$.
\end{enumerate}
This shows that $\bar \eta$ is parallel in the normal connection and hence $H^\xi (x)= H^{\xi '}(x)$ is an
isoparametric submanifold of $\RR^n$ (see the remark at page ~\pageref{tubolonomico} in \S~\ref{1.5}).
\end{proof}

\medskip

\begin{corollary} There is a compact group of isometries of $\RR^n$, which acts as the isotropy
representation of a simple symmetric space such that (locally) $K.\pi (x)
 = \pi (H^\xi(x))$, for all $x\in M$.  \end{corollary}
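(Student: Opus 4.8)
The plan is to read off the corollary from Proposition~\ref{isoparam} in two moves: first promote the isoparametric leaf $H^\xi(x)$ to an orbit of an $s$-representation, and then transport this homogeneity down to $N$ along the focalization $\pi\colon M\to N$. Under the hypothesis $\Cal H\subset(\ker A^M_\xi+\ker A^M_{\xi'})$ of Proposition~\ref{isoparam}, each $H^\xi(x)=H^{\xi'}(x)$ is an isoparametric submanifold of $\RR^n$ whose normal bundle splits as $\nu(H^\xi(x))=(\tilde\nu^\xi)_{|H^\xi(x)}\oplus(\nu M)_{|H^\xi(x)}$ and whose curvature normals fall into the three families (a)--(c) displayed in that proof. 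I would first establish that $H^\xi(x)$, viewed in its affine hull, is a full, irreducible isoparametric submanifold of rank at least $3$; then apply the homogeneity theorem for such submanifolds to produce the group $K$; and finally recognise $\pi(H^\xi(x))$ as a singular $K$-orbit.

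For the rank and irreducibility I would exploit that the fibre $S(x)\subset H^\xi(x)$ is, by the standing Assumption together with the non-transitivity of $\Phi^\perp$, an \emph{irreducible} isoparametric submanifold of $\nu_{\pi(x)}N$ of rank $\geq 2$; its curvature normals are exactly the family (a), and they already constitute a connected (irreducible) Coxeter system. The focalizations $\pi^\xi\colon M\to M_{\tilde\xi}$ and $\pi^{\xi'}\colon M\to M_{\tilde{\xi'}}$ contribute the further curvature normals (b) and (c), which are transverse to those of $S(x)$ (they pair non-trivially with $\xi$, resp.\ $\xi'$, whereas the family (a) is $\xi$-null along $\ker A^M_\xi$); adjoining them keeps the Coxeter graph connected and raises the substantial codimension to at least $3$. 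Irreducibility of $H^\xi(x)$ then follows from Moore's lemma, since a non-trivial $A$-invariant parallel splitting would disconnect this Coxeter system and, in particular, split the irreducible $S(x)$.

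With $H^\xi(x)$ a full irreducible isoparametric submanifold of rank $\geq 3$, the homogeneity theorem of Thorbergsson \cite{Th} (in its local form) supplies a compact group $K\subset O(n)$ acting on $\RR^n$ as the isotropy representation of an irreducible (simple) symmetric space, with $H^\xi(x)$ a principal $K$-orbit. For the descent, recall from \S~\ref{down} that $\pi(H^\xi(x))$ is a focal manifold of $H^\xi(x)$, obtained by collapsing the fibres $S(x)$ (the normal holonomy orbits) via the parallel normal field $-\psi$; since every focal manifold of a principal orbit of an $s$-representation is again a (singular) $K$-orbit, and since $\pi(x)\in\pi(H^\xi(x))$, we conclude $K.\pi(x)=\pi(H^\xi(x))$. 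Lemma~\ref{hol-tubo}(i), identifying the normal holonomy of $\pi(H^\xi(x))$ with $\Phi^\perp$ of $N$, is the consistency check that this is the $s$-representation already seen on $N$. The step I expect to be the real obstacle is precisely the bookkeeping of the combined curvature-normal system (families (a)--(c)): proving that it is connected and of substantial rank $\geq 3$, and making rigorous the local version of the homogeneity theorem needed to pass from the merely local leaves $H^\xi(x)$ to the global group $K$.
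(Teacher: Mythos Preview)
Your approach is workable in principle but genuinely different from the paper's, and the paper's route bypasses exactly the step you flag as the obstacle. You apply Thorbergsson upstairs to the isoparametric leaf $H^\xi(x)$, which forces you to verify directly that its Coxeter system (built from families (a)--(c)) is irreducible and has rank $\geq 3$; this is the bookkeeping you rightly identify as delicate, and your sketch of it (``adjoining (b), (c) keeps the graph connected'', ``a splitting would split $S(x)$'') is not yet a proof. The paper instead goes downstairs immediately: it observes that $\pi(H^\xi(x))$, being a parallel focal manifold of the isoparametric $H^\xi(x)$, has \emph{constant principal curvatures}; then fullness and irreducibility of $\pi(H^\xi(x))$ are inherited from those of $N$ via the description $N=\bigcup_{y\in\pi(\Sigma_\xi(x))}(\pi(H^\xi(x)))_{y-\pi(x)}$ together with Lemma~\ref{hol-tubo}(ii), and the normal holonomy of $\pi(H^\xi(x))$ is identified with $\Phi^\perp_N$ (irreducible, non-transitive) by Lemma~\ref{hol-tubo}(i). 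Thorbergsson is then applied to this picture to conclude that $\pi(H^\xi(x))$ is already a focal orbit of an $s$-representation, with no Coxeter-graph analysis needed. The paper also makes the ``for all $x$'' clause explicit: distinct $\pi(H^\xi(x'))$ are parallel manifolds to one another, and since $K$ realises the $\nu_0$-parallel transport, the same $K$ works for every $x$. In short, your plan treats Lemma~\ref{hol-tubo} as a consistency check after the fact, whereas in the paper it is the engine that supplies irreducibility and the non-transitive normal holonomy, making the rank-and-connectedness argument unnecessary.
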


\begin{proof} From the above proposition one obtains that $\pi (H^\xi (x))$ is a submanifold with constant
principal curvatures. If $\pi (H^\xi (x))$ is reducible or non-full then $N$ would be reducible or non full
since
$$ N = \underset {y\in \pi (\Sigma _\xi (x))} \bigcup (\pi (H^\xi (x)))_{y- \pi (x)} \text { \  \ \ \
\ \ \ (locally)}$$ and $\pi (\Sigma _\xi (x))\underset {\text {locally}}  { \sim }(\pi (\tilde \nu
^\xi))_{\pi (x)} = \nu _0 (\pi (H^\xi (x)))$ (see Lemma~\ref{hol-tubo}). But the normal holonomy  of $\pi (H ^
\xi (x))$ is irreducible and non-transitive (in the orthogonal complement of the fixed point set). Then, by
making use of Thorbergsson's Theorem \cite {Th}, $\pi (H^\xi (x))$ is a focal manifold of a homogeneous
isoparametric submanifold (we have used that an isoparametric submanifold is always contained in a complete
one \cite {PT}). Then there exists a compact group of isometries $K$ of Euclidean space, acting as the isotropy
representation of a simple symmetric space, and such that (locally) $K.\pi (x)
 = \pi (H^\xi(x))$. This for a fixed $x$. But, for  $x'\neq x$,  $\pi (H^\xi (x'))$ is a parallel manifold, in
 the ambient space, to $\pi (H^\xi (x))$. Since the group $K$ gives the parallel transport in $\nu _0 (K.x)$
 (see Proposition 3.2.4 in \cite {BCO}), one has that $K.\pi (x)
 = \pi (H^\xi(x))$, for all $x\in M$.
\end{proof}

 \medskip

 We summarize the main result in this section, which will be the key tool for the whole article, in the
 following

\begin{theorem}[ tool]\label{main-lemma} Let $N \subset \RR^n$ be a submanifold and assume that its normal holonomy group acts
 irreducibly and non-transitively on the normal space. Let $\eta _q \in \nu _qN$ be a principal vector for the
 normal holonomy action of $\Phi^\perp_q$ on $\nu _qN$.
Let us consider the normal holonomy tube $M: = N_{\eta _q}$, which has flat normal bundle ($\eta _q$ short, in a
neighbourhood of  a generic $q$). \newline Assume that there  exist two non-trivial parallel normal fields $\xi, \ \xi '$
to $N_{\eta _q}$ such that
$$ \Cal H \subset  \ker (A^M_{\xi}) + \ker (A^M_{\xi '})\, ,  $$
where $\Cal H$ is the horizontal distribution in the holonomy tube $M$ (we are assuming, since we are working locally
that the right hand side of the above inclusion, as well as both of its terms, is a $C^\infty$-distribution).
\newline
Then there is a compact group $K$ of isometries of $\RR^n$, acting as the isotropy representation
of an irreducible symmetric space such that, locally around $q$,
$$ N =  \underset {v\in (\nu _0 (K.q))_q}  {\bigcup} (K.q)_v$$
i.e., $N$ is locally, the union of the parallel orbits to $K.q$.

Moreover, $(\nu _0
(K.x))_x$ is contained in the nullity space $\mathcal{N}^{N}_x$ of the second fundamental form $\alpha ^N$ at $x$.
\end{theorem}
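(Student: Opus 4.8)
The plan is to harvest the structural results assembled in \S\ref{indep}--\S\ref{down} and feed them into the homogenization argument. First I would fix a point $p\in M=N_{\eta_q}$ with $\pi(p)=q$ and recall from \S\ref{indep} that, under the standing irreducibility and non-transitivity assumption on $\Phi^\perp$, the canonical foliation $x\mapsto H^\xi(x)$ does not depend on the parallel normal field used to define it; in particular $H^\xi(x)=H^{\xi'}(x)$ for all $x$. This independence is precisely what lets the two fields $\xi,\xi'$ of the hypothesis govern one and the same foliation, so that both summands $\ker A^M_\xi$ and $\ker A^M_{\xi'}$ act on the geometry of a \emph{single} leaf.

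Next I would invoke Proposition~\ref{isoparam}: the covering hypothesis $\Cal H\subset \ker A^M_\xi+\ker A^M_{\xi'}$ forces every leaf $H^\xi(x)=H^{\xi'}(x)$ to be an isoparametric submanifold of $\RR^n$, since each of its curvature normals arises, through cases (a)--(c) of that proof, as the parallel extension of a curvature normal of an isoparametric fiber and is therefore itself parallel. Projecting down by $\pi\colon M\to N$, the focal manifold $\pi(H^\xi(x))$ inherits constant principal curvatures from the isoparametricity upstairs, and by Lemma~\ref{hol-tubo}(i) its normal holonomy restricts on $\nu_{\pi(x)}N$ to that of $N$, hence is irreducible and non-transitive. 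Thorbergsson's theorem \cite{Th} then produces, exactly as in the Corollary to Proposition~\ref{isoparam}, a compact group $K$ acting as the isotropy representation of an irreducible symmetric space with $K.q=\pi(H^\xi(p))$, the same $K$ implementing the $\nu_0$-parallel transport of this orbit (Proposition 3.2.4 of \cite{BCO}).

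To obtain the stated shape of $N$ I would combine $K.q=\pi(H^\xi(p))$ with the local splitting of \S\ref{down},
\[
N=\underset{y\in \pi(\Sigma_\xi(p))}{\bigcup}\big(\pi(H^\xi(p))\big)_{y-q}\,.
\]
Because parallel transport in $\nu_0(K.q)$ is realized by $K$, each parallel manifold $\big(\pi(H^\xi(p))\big)_{y-q}$ is again a $K$-orbit, i.e.\ a parallel orbit $(K.q)_v$; and by Lemma~\ref{hol-tubo}(ii) the displacements $y-q$ range precisely over $(\nu_0(K.q))_q$. This rewrites the union as $N=\bigcup_{v\in(\nu_0(K.q))_q}(K.q)_v$, as required. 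The \emph{moreover} clause is then immediate: \S\ref{down} shows $\pi(\tilde\nu^\xi)$ lies in the nullity of $\alpha^N$, while Lemma~\ref{hol-tubo}(ii) identifies $(\nu_0(K.x))_x$ with $(\pi(\tilde\nu^\xi))_x$, whence $(\nu_0(K.x))_x\subset \mathcal N^N_x$.

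The genuine difficulty sits upstream in Proposition~\ref{isoparam}, which already absorbs the full force of the covering hypothesis; once isoparametricity of the leaves is secured, the remaining work is bookkeeping of a polar-type decomposition. The one point I would treat carefully is verifying that $\pi(H^\xi(x))$ is genuinely full and irreducible so that Thorbergsson's theorem applies: this follows because reducibility or non-fullness of the orbit would, via the \S\ref{down} decomposition, propagate to $N$ itself, contradicting the irreducibility and fullness forced by the irreducible non-transitive action of $\Phi^\perp$.
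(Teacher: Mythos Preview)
Your proposal is correct and follows essentially the same route as the paper: Theorem~\ref{main-lemma} is presented there as a summary of the work done in \S\ref{2.2}--\S\ref{homog}, and you have accurately traced the chain \S\ref{indep} $\Rightarrow$ Proposition~\ref{isoparam} $\Rightarrow$ its Corollary $\Rightarrow$ the \S\ref{down} decomposition together with Lemma~\ref{hol-tubo}, including the fullness/irreducibility check needed for Thorbergsson's theorem and the identification $(\nu_0(K.x))_x=(\pi(\tilde\nu^\xi))_x\subset\mathcal N^N_x$ for the \emph{moreover} clause.
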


\begin{remark}\label{nullity}
We are in  the assumptions of the above theorem.
\newline
(i) The orbit $K.q$ cannot be isoparametric, otherwise $\RR^n = \underset {v\in \nu _q (K.q)}
{\bigcup} (K.q)_v = \underset {v\in (\nu _0 (K.q))_q} {\bigcup} (K.q)_v = N$.\newline
(ii) Observe that  $\text {dim} (\nu _0 (K.x))\geq 1$ (i.e. dimension of the standard fiber) since the
position vector field, from the fixed point of $K$, gives a parallel normal field. So, the nullity is
non-trivial. Note that $\underset {v\in \nu _0 (K.p)} {\bigcup } (K.q)_v $ is globally never a submanifold
since there are always focal parallel orbits. We will come back to this discussion, on the completeness of $N$,
in the case that $N$ is a complex submanifold of $\CC^n$.
\end{remark}

\section{Complex submanifolds of $\CC^n$ with non-transitive normal holonomy}\label{3}

Let $N \subset \CC^n$ be a complex (not necessarily complete) submanifold which is irreducible and full.
The standard complex structure of $\CC^n$ is denoted, as usual, by $J$.\\

Then, by \cite {DS}, the normal holonomy group acts irreducibly on the normal space. Assume furthermore, that \textit{the normal holonomy group  of $N$ is non-transitive} on the normal sphere. Let $\Phi ^\perp_q$ be the normal holonomy group at $q \in N$, which acts by complex transformations on $\nu _q N$. Choose $ \xi ^1_q \in \nu _q N$ such that the orbit
 $\Phi ^\perp_q.\xi^1 _q$ projects down to the (unique) complex orbit in the (complex) projectivization of the normal space
 $P(\nu _qN)$ of $\nu _q N$ (see \cite{BW}). This implies that the orthogonal complement of $\xi ^1_q$ in the normal space
 of  the holonomy orbit,
  $(\xi ^1 _q) ^\perp
 \cap  \nu _ {\xi ^1 _q} \Phi ^\perp_q.\xi^1 _q $ is a complex subspace of  $\nu _q N$. Since the normal
 holonomy is not transitive on the sphere, $(\xi ^1 _q) ^\perp
 \cap  \nu _ {\xi ^1 _q} \Phi ^\perp_q.\xi^1 _q $ is not a trivial subspace.

  Now choose
 $\xi ^2 _q \neq 0$ which lies in $(\xi ^1_q)^\perp \cap  \nu _ {\xi ^1 _q} \Phi ^\perp_q.\xi^1 _q $.

 Since $R^\perp_{X,Y}$ always lies in the holonomy algebra one gets that $0 = \langle R^\perp _{X,Y}\xi ^1 _q ,
 \xi^2
_q\rangle$. So, by the Ricci identity, $[A^N_{\xi ^1 _q}, A^N_{\xi ^2_q}] = 0$. The same is true if we replace
$\xi ^2_q$ by $J\xi ^2_q$. So, $A^N_{\xi ^1 _q}$ also commutes with $A^N_{J\xi ^2_q}$.

By the well-known formulae of complex geometry $A^N_{J\xi ^2_q}= -JA^N_{\xi ^2 _q}$ and $J$ anti-commutes with all shape operators. So,
$$[A^N_{\xi ^1 _q}, A^N_{J\xi ^2_q}] = J (A^N_{\xi ^1 _q}A^N_{\xi ^2 _q} + A^N_{\xi ^2 _q}A^N_{\xi ^1 _q}) =0.$$
But
$$[A^N_{\xi ^1 _q}, A^N_{\xi ^2_q}] =  A^N_{\xi ^1 _q}A^N_{\xi ^2 _q} - A^N_{\xi ^2 _q}A^N_{\xi ^1 _q} =0\, .$$
Then \begin{equation}\label{(A)}A^N_{\xi ^1 _q}A^N_{\xi ^2 _q }= A^N_{\xi ^2 _q}A^N_{\xi ^1 _q} = 0\, .  \end{equation}
We may assume that  the slice representation orbit $(\Phi ^\perp_q)_{\xi ^1 _q}.\xi ^2 _q$ is a principal one
in the normal space to the holonomy orbit,
 where $(\Phi ^\perp_q)_{\xi ^1 _q}$ is the isotropy subgroup at $\xi ^1_q$. Observe that we can find such a
 $\xi ^2 _q$, since $\xi ^1_q$ is a fixed point for the slice representation of $(\Phi ^\perp_q)_{\xi ^1 _q}$.

Observe, by construction, that one has also that $$A^N_{\tau ^\perp _c (\xi ^1 _q)}A^N_{\tau ^\perp _c(\xi ^2
_q) }= 0$$ where $\tau ^\perp _c$ is the normal parallel transport along any arbitrary curve $c$ in $N$ which
starts at $q$.

Consider the iterated holonomy tube  $$(N_{\xi ^1 _q})_{\xi ^2_q},$$ which coincides with the full holonomy tube
$N_{\zeta _q}$, where $\zeta _q= \xi ^1 _q + \xi ^2 _q$ (see the theorem in Appendix of \cite{O2}).
Of course we have to choose $\xi ^1_q$ short and after that  $ \xi ^2 _q$ short enough. The vector
$\xi ^1_q$ gives rise to a parallel normal field $\tilde \xi$ to the partial holonomy tube $N_{\xi ^1 _q}$ so
that $(N_{\xi ^1 _q})_{\tilde \xi} = N$ (see \S~\ref{1.5}). This parallel normal field can be lifted to a
parallel normal field $\xi$ of $(N_{\xi ^1_q})_{\xi ^2_q} = N_{\zeta _q}$. We can do so, since $\tilde \xi (x)$
is fixed by the normal holonomy group of $N_{\xi ^1_q}$ at $x$ and hence it is perpendicular to  any holonomy
orbit.
  Similarly, $\xi ^2_q$ gives rise to a parallel normal field $\xi '$ in $(N_{\xi ^1_q})_{\xi ^2_q} = N_{\zeta _q}$.

  By (\ref{(A)}) and the tube formulae relating shape operators of parallel focal manifolds (see \S~\ref{sollev}) one obtains
  that
 $$ A^M _{\xi}A^M_{\xi '\, | \, \Cal H} =0$$
 where $M:=  N_{\zeta _q}$ and $\Cal H$ is the horizontal distribution on $M$. Clearly, by (\ref{(A)}) we also have $A^M _{\xi'}A^M_{\xi \, | \, \Cal H} =0$. Therefore $A^M_{\xi \, | \, \Cal H}$ and $A^M _{\xi'  \, | \, \Cal H}$ are simultaneously diagonalizable, so $\Cal H \subset(\ker  A^M_{\xi} +  \ker A^M_{\xi '}) $.

 \medskip

 By Theorem~\ref{main-lemma}  in the previous section, one has the following

\begin{theorem}\label{main-cx} Let $N\subset \CC^n$ be a complex irreducible and full
 submanifold such that the normal holonomy group (which must act irreducibly by \cite{DS}) is not transitive on the unit sphere of the normal space.  Then there is a group $K$, acting as the isotropy representation of an irreducible Hermitian symmetric
 space, such that $N$ is locally given, around a generic point $q$, as
$$ N =  \underset {v\in (\nu _0 (K.q))_q}  {\bigcup} (K.q)_v\, .$$
Moreover $(\nu _0 (K.p))_p$ is contained in the nullity space $\mathcal{N}^{N}_p$ of the second fundamental form $\alpha ^N$ at $p$.
.
\end{theorem}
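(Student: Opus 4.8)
The plan is to reduce Theorem~\ref{main-cx} to the general structural result Theorem~\ref{main-lemma} by exhibiting, on a full holonomy tube over $N$, two parallel normal fields whose kernels jointly cover the horizontal distribution $\Cal H$. Since $N$ is complex, irreducible, and full, the result of \cite{DS} guarantees that the normal holonomy $\Phi^\perp_q$ acts irreducibly on $\nu_qN$, and by hypothesis it is not transitive on the unit normal sphere. These are precisely the standing assumptions under which Theorem~\ref{main-lemma} applies, so the entire task is to produce the field pair and verify the kernel-covering inclusion; the homogeneity conclusion and the nullity statement are then immediate, the group $K$ being an $s$-representation that is moreover the isotropy representation of a \emph{Hermitian} symmetric space because everything in sight is $J$-invariant.

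First I would construct the two special normal vectors at $q$ using the complex structure on the holonomy orbit. Choosing $\xi^1_q$ so that $\Phi^\perp_q.\xi^1_q$ projects to the unique complex orbit in the projectivized normal space $P(\nu_qN)$ (following \cite{BW}) makes the normal space of that orbit, $(\xi^1_q)^\perp\cap\nu_{\xi^1_q}\Phi^\perp_q.\xi^1_q$, a complex subspace, which is nontrivial exactly because the holonomy fails to be transitive. Picking $\xi^2_q$ in this complex subspace, the key algebraic fact is that $\langle R^\perp_{X,Y}\xi^1_q,\xi^2_q\rangle=0$ (since $\xi^2_q$ lies normal to the orbit and $R^\perp$ lands in the holonomy algebra), so by the Ricci identity $[A^N_{\xi^1_q},A^N_{\xi^2_q}]=0$. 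The decisive step is to run the same argument with $J\xi^2_q$ in place of $\xi^2_q$: using $A^N_{J\eta}=-JA^N_\eta$ and that $J$ anticommutes with every shape operator, the commutator $[A^N_{\xi^1_q},A^N_{J\xi^2_q}]$ becomes $J$ times the \emph{anticommutator} of $A^N_{\xi^1_q}$ and $A^N_{\xi^2_q}$; combining the vanishing of both commutator and anticommutator forces the sharp relation $A^N_{\xi^1_q}A^N_{\xi^2_q}=A^N_{\xi^2_q}A^N_{\xi^1_q}=0$, equation~(\ref{(A)}). This is the genuinely complex input and, in my view, the heart of the whole argument.

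Next I would transport this pointwise identity onto the full holonomy tube $M:=N_{\zeta_q}$ with $\zeta_q=\xi^1_q+\xi^2_q$, realized as the iterated tube $(N_{\xi^1_q})_{\xi^2_q}$. The vector $\xi^1_q$, being fixed by the normal holonomy of $N_{\xi^1_q}$, extends to a parallel normal field $\xi$ on $M$, while $\xi^2_q$ extends to a parallel field $\xi'$; the relation~(\ref{(A)}) is preserved along parallel transport because its terms are built holonomy-invariantly. Applying the tube formulae of \S~\ref{sollev} that relate shape operators of $M$ to those of its focal manifold $N$, the products $A^M_\xi A^M_{\xi'}$ and $A^M_{\xi'}A^M_\xi$ vanish when restricted to $\Cal H$. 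Two commuting symmetric operators whose product is zero admit a simultaneous eigenbasis in which no common eigenvector has both eigenvalues nonzero, hence $\Cal H\subset\ker A^M_\xi+\ker A^M_{\xi'}$, which is the hypothesis of Theorem~\ref{main-lemma}.

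The main obstacle I anticipate is not the algebra but the transfer across the focalization: one must check that $\xi$ and $\xi'$ are genuinely \emph{parallel} on $M$ and that the annihilation identity~(\ref{(A)}), established pointwise at $q$, survives both parallel transport (so that it holds at every point of $M$, not just over $q$) and the tube-formula conversion from $A^N$ to $A^M$ restricted to $\Cal H$. This requires that $\xi^2_q$ be chosen in a \emph{principal} slice-representation orbit so the eigenvalue multiplicities stay constant and the distributions are smooth on a dense open set, and that $\xi^1_q,\xi^2_q$ be taken short enough that $1$ is not an eigenvalue of the relevant shape operators and the tubes are embedded. Once the inclusion $\Cal H\subset\ker A^M_\xi+\ker A^M_{\xi'}$ is in hand, Theorem~\ref{main-lemma} yields the compact group $K$ and the local expression $N=\bigcup_{v\in(\nu_0(K.q))_q}(K.q)_v$, with $K$ an $s$-representation; that $K$ arises from a Hermitian symmetric space follows since the whole configuration is $J$-invariant, and the final inclusion $(\nu_0(K.p))_p\subset\mathcal{N}^N_p$ is exactly the nullity assertion carried over from Theorem~\ref{main-lemma}.
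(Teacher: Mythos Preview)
Your reduction to Theorem~\ref{main-lemma} is exactly the paper's argument: the choice of $\xi^1_q$ via the complex orbit in $P(\nu_qN)$, the derivation of $A^N_{\xi^1_q}A^N_{\xi^2_q}=0$ from the vanishing of both the commutator and (via $J\xi^2_q$) the anticommutator, the passage to the iterated holonomy tube $M=N_{\zeta_q}$, the extension of $\xi^1_q,\xi^2_q$ to parallel fields $\xi,\xi'$ on $M$, and the tube-formula transfer to obtain $\Cal H\subset\ker A^M_\xi+\ker A^M_{\xi'}$ --- all of this matches the paper line for line.

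The one genuine gap is your last clause: ``that $K$ arises from a Hermitian symmetric space follows since the whole configuration is $J$-invariant.'' This is precisely the content the paper singles out as what \emph{remains} to prove, and it is not automatic. Theorem~\ref{main-lemma} only delivers $K$ as the isotropy of an irreducible symmetric space; nothing in that statement forces $K$ to act by complex linear maps, and the orbits $K.q$ need not be complex submanifolds a priori. The paper's argument is: from the local description $N=\bigcup_v (K.q)_v$ one sees that the position vector $\overrightarrow{p}$ (from the fixed point of $K$) lies in $T_pN$; since $N$ is complex, $i\overrightarrow{p}\in T_pN$ as well, so the $S^1$-action $x\mapsto e^{it}x$ is tangent to $N$ and hence $N$ is locally $S^1$-invariant. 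Letting $\bar K$ be generated by $K$ and $S^1$, one has $\bar K.p\subset N$, so $\bar K$ is not transitive on the sphere; Simons' theorem \cite{S,O1} then forces $\bar K=K$, whence $S^1\subset K$ and $K$ is of Hermitian type. You should supply this step rather than appeal to an unspecified $J$-invariance.
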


\begin{proof}  It remains only to show that $K$ is of Hermitian type. We may assume that the origin $0\in
\C^n$ is the fixed point of $K$. If $p\in N$, then the position vector $ \overrightarrow{p}$, by the description
given before, belongs to $T_pN$. So,  $i\overrightarrow{p} \in T_pN$. Then the orbits of the $S^1$ action $(t,
x) \mapsto e^{it}x$ on $\CC^n$ are tangent  to $N$ at the points of $N$. This implies that $N$ is (locally)
$S^1$-invariant. Let now $\bar K$ be the subgroup of linear isometries of $\CC^n$ generated by $K$ and $S^1$.
Then $\bar K.p  \subset N$ and so $\bar K$ is not transitive in the sphere. By the Theorem of Simons \cite {S, O1}, since $K$ acts irreducibly, one must have $\bar K = K$ and so $K$ is of Hermitian type.
\end{proof}

As a corollary, we are now ready to prove the Berger type theorem for submanifolds of $\CC^n$



\begin{proof}[Proof of  Theorem~\ref{berger-c}] We are in the assumptions at the beginning of this section. If the normal holonomy group of $N$ is not transitive, then, locally,
$$ N =  \underset {v\in (\nu _0 (K.q))_q}  {\bigcup} (K.q)_v$$
where $K$ acts as in the previous theorem, and in particular it is irreducible (we assume that $0$ is the fixed
point of $K$). Recall we are assuming that $N$ is complete (not necessarily immersed), so, if $p\in N$, since $N$ is analytic, then
the line $t \mapsto tp$ is contained in $N$ (i.e. this line is the image, via the immersion, of a geodesic in
$N$. In order to simplify the notation we avoid the immersion map). From the construction, for all $t$, $T_{tp}N
= T_pN$, as subspaces of $\CC^n$. So, the isotropy $K_{tp}$ must leave this subspace invariant. A
contradiction for $t=0$, since $K$ acts irreducibly. Thus the normal holonomy group must be transitive.
\end{proof}


\section{Complex submanifolds of $\C P^n$\\with non-transitive normal holonomy}\label{4}

Let $M \subset \C P^n$ be a full complex submanifold of the complex projective space. Let $\mathcal{N}^{M}_p = \{ X_p \in T_pM : \alpha^M(\cdot, X) = 0 \}$ be the nullity of the second fundamental form $\alpha^M$ at $p \in M$.

The goal of this section is to prove the following theorem.

\begin{theorem} \label{main} Let $M \subset \C P^n$ be a complex, complete and full submanifold. If the normal holonomy group $\Phi_p^{\perp}$ does not act transitively on the unit sphere of the normal space $\nu_p(M)$ at $p \in M$ then $M$ is the complex orbit, in the complex projective space, of the isotropy
representation of an irreducible Hermitian symmetric space.
\end{theorem}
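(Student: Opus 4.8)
The plan is to reduce the statement to the Euclidean theorem of Section~\ref{3} by working with the lift $\widetilde M = \pi^{-1}(M) \subset \C^{n+1}\setminus\{0\}$. First I would record the basic features of this complex cone: since $M$ is full in $\C P^n$ the cone $\widetilde M$ is full in $\C^{n+1}$, and $\dim_{\C}\widetilde M = \dim_{\C}M + 1$, so $\nu_{\widetilde p}\widetilde M$ has the same complex dimension $n-\dim_{\C}M$ as $\nu_p M$. Using the horizontal lift together with the dilation property $\pi^*\langle\,,\,\rangle_{FS} = \lambda^2\langle\,,\,\rangle$ from \S~\ref{4.1}, I would fix a $\C$-linear identification $\nu_p M \cong \nu_{\widetilde p}\widetilde M$, $\xi \mapsto \widetilde\xi$; here $\widetilde\xi$ is normal to $\widetilde M$ because the vertical distribution $\mathcal V$ is tangent to $\widetilde M$ while the horizontal lift of $\nu M$ is orthogonal both to $\mathcal V$ and to the horizontal lift of $TM$.

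The heart of the argument, and the step I expect to be the main obstacle, is to transfer the non-transitivity of $\Phi^\perp$ from $M$ to $\widetilde M$. I would combine two observations. First, by \S~\ref{4.1} one has $\mathcal V \subset \mathcal{N}^{\widetilde M}$, i.e. the complex line $\C\widetilde p$ lies in the relative nullity of $\widetilde M$; since every shape operator kills a vertical vector $V$, the Ricci equation $\langle R^\perp_{X,Y}\xi,\eta\rangle = \langle[A_\xi,A_\eta]X,Y\rangle$ gives $R^\perp_{V,\,\cdot}=0$, so the restricted normal holonomy of $\widetilde M$ is generated purely by $\nabla^\perp$-parallel transport along horizontal lifts of curves in $M$. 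Second, the O'Neill-type formula of Proposition~\ref{lift}, together with the constancy of $\lambda$ along horizontal curves, shows that for a horizontal lift $\widetilde\xi$ the component of $D_{\widetilde X}\widetilde\xi$ normal to $\widetilde M$ is the horizontal lift of $\nabla^{\perp}_X\xi$; hence $\nabla^\perp$-parallel transport in $\nu\widetilde M$ along a horizontal lift corresponds, under the identification above, to $\nabla^\perp$-parallel transport in $\nu M$ along the projected curve. Combining the two, $\Phi^\perp_{\widetilde p}(\widetilde M)$ is conjugate, through a $\C$-linear isometry, to $\Phi^\perp_p(M)$, so it is not transitive on the unit sphere of $\nu_{\widetilde p}\widetilde M$.

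With non-transitivity established I would invoke the Euclidean machinery. The normal holonomy of the full complex submanifold $M$ acts irreducibly on $\nu_pM$ (the relevant input of \cite{AD}), so the transfer of the previous paragraph makes $\Phi^\perp(\widetilde M)$ act irreducibly as well; a splitting of $\widetilde M$ would render this action reducible, so the full complex cone $\widetilde M$ is irreducible. Theorem~\ref{main-cx} then applies to $N := \widetilde M$: there is a group $K$ acting as the isotropy representation of an irreducible Hermitian symmetric space with, locally around a generic $q$, $\widetilde M = \bigcup_{v\in(\nu_0(K.q))_q}(K.q)_v$ and $(\nu_0(K.q))_q \subset \mathcal{N}^{\widetilde M}_q$. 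Since $\widetilde M$ is a complex cone its nullity contains $\C\widetilde q$, and the parallel orbits $(K.q)_v$ are the rescaled orbits $K.(\mu q)$ filling out the affine cone over the complex orbit $K.[q]\subset \C P^n$, the highest-weight orbit of the $s$-representation.

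Finally I would project down and globalize. Since all the rescaled orbits $K.(\mu q)$ project under $\pi$ to the single orbit $K.[q]$, one obtains $M = \pi(\widetilde M) = K.[q]$ on a neighbourhood of the generic point. Both $M$ and the complex orbit $K.[q]$ are connected, complete (indeed compact) complex submanifolds of $\C P^n$ agreeing on a nonempty open set; by real-analyticity the locus of agreement is open and closed, whence $M = K.[q]$ everywhere. This exhibits $M$ as the complex orbit, in the complex projective space, of the isotropy representation of an irreducible Hermitian symmetric space, as claimed.
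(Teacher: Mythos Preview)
Your reduction to $\widetilde M$ and invocation of Theorem~\ref{main-cx} is the same overall strategy as the paper's, but there is a real gap at the point where you write ``the parallel orbits $(K.q)_v$ are the rescaled orbits $K.(\mu q)$ filling out the affine cone over the complex orbit $K.[q]$''. Theorem~\ref{main-cx} only tells you that some group $K$ acting as an irreducible Hermitian $s$-representation exists, that locally $\widetilde M=\bigcup_{v\in(\nu_0(K.q))_q}(K.q)_v$, and that $(\nu_0(K.q))_q\subset\mathcal N^{\widetilde M}_q$. It does \emph{not} tell you which $K$-orbit $K.q$ is, nor that $\dim(\nu_0(K.q))_q=1$. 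The position vector contributes a one-dimensional piece to $\nu_0(K.q)$, but for non-minimal orbits of an $s$-representation the flat part $\nu_0$ is genuinely larger; in that case the parallel orbits $(K.q)_v=K.(q+v)$ are not complex rescalings of $K.q$ and do not all project to the single orbit $K.[q]$. So the equality $\pi(\widetilde M)=K.[q]$ on an open set is unproved.

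The paper closes this gap with two additional ingredients you did not invoke: Lemma~\ref{lem2} (a horizontal nullity vector of $\widetilde M$ descends to a nullity vector of $M$) and Theorem~\ref{no-abe} in the Appendix (a complete full complex $M\subset\C P^n$ with non-transitive normal holonomy has trivial nullity on an open set). Together these force $\dim(\nu_0(K.q))_q=1$: any $v\in(\nu_0(K.q))_q$ orthogonal to $q$ is also orthogonal to $iq$ (since $iq$ is tangent to $K.q$), hence is not a complex multiple of $q$; by Lemma~\ref{lem2} its projection would lie in $\mathcal N^M$, contradicting Theorem~\ref{no-abe}. Only then do the leaves of the nullity of $\widetilde M$ coincide with the $\pi$-fibres, $K$ acts transitively on $M$, and the desired conclusion follows. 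Note also that this is where completeness is genuinely used (inside Theorem~\ref{no-abe}); the analytic-continuation step you propose at the end is not how completeness enters the argument.
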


We start with the following

\begin{lemma} \label{lem2} Let $M \subset \C P^n$ be a complex submanifold and let $\widetilde{M} \subset \C^{n+1}$ be its lift to $\C^{n+1}$. Assume that the tangent vector  $\widetilde{v}_{\widetilde{p}} \in T_{\widetilde{p}} \widetilde{M}$ is not a complex multiple of the position vector $\widetilde{p}$. If $\widetilde{v}_{\widetilde{p}} \in \mathcal{N}^{\widetilde{M}}$ then its projection $v_p$ to $T_pM$ belongs to the nullity of the second fundamental form of $M$, i.e. $v_p \in \mathcal{N}^M$.
\end{lemma}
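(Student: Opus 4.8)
The plan is to relate the second fundamental form of the lift $\widetilde M \subset \C^{n+1}$ to that of $M \subset \C P^n$ by way of the O'Neill-type formula in Proposition~\ref{lift}. The key structural input is that the fibers of the submersion $\pi : \widetilde M \to M$ are tangent to the complex lines through the origin, so the vertical distribution $\mathcal V$ is spanned at $\widetilde p$ by the position vector $\widetilde p$ and its image $J\widetilde p = i\widetilde p$ under the complex structure; and we already know $\mathcal V \subset \mathcal N^{\widetilde M}$. Thus $\mathcal N^{\widetilde M}$ contains the vertical directions automatically, and the content of the lemma is that a \emph{horizontal} nullity vector projects to a nullity vector of $M$.

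First I would fix $\widetilde v_{\widetilde p} \in \mathcal N^{\widetilde M}$ not a complex multiple of $\widetilde p$; decomposing into vertical and horizontal parts, and using $\mathcal V \subset \mathcal N^{\widetilde M}$, I may assume $\widetilde v_{\widetilde p}$ is horizontal, i.e. $\widetilde v_{\widetilde p} = \widetilde{v}_{\widetilde p}$ is the horizontal lift of $v_p = d\pi(\widetilde v_{\widetilde p})$. To test whether $v_p \in \mathcal N^M$ I must show $\alpha^M(v_p, X) = 0$ for every $X \in T_pM$, equivalently $(\nabla^{FS}_{\widetilde X}\widetilde v)_{\widetilde p}$ is tangent to $M$ for every horizontal lift $\widetilde X$. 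Now apply \eqref{fun}: the Euclidean ambient derivative $D_{\widetilde X}\widetilde v$ at $\widetilde p$ equals $(\widetilde{\nabla^{FS}_X v})_{\widetilde p}$ plus a vertical term. On the other hand, since $\widetilde v_{\widetilde p} \in \mathcal N^{\widetilde M}$, the second fundamental form $\alpha^{\widetilde M}(\widetilde X,\widetilde v)$ vanishes, so $D_{\widetilde X}\widetilde v = \nabla^{\widetilde M}_{\widetilde X}\widetilde v$ is tangent to $\widetilde M$. Projecting this tangency statement down through $d\pi$, and noting that the vertical correction term in \eqref{fun} lies in $T\widetilde M$ (as $\mathcal V \subset T\widetilde M$), I conclude that $(\widetilde{\nabla^{FS}_X v})_{\widetilde p}$, and hence $\nabla^{FS}_X v$, is tangent to $M$; therefore $\alpha^M(X,v) = 0$.

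The step I expect to be the main obstacle is handling the horizontal versus vertical bookkeeping carefully, since $\pi : \C^{n+1}\setminus\{0\} \to \C P^n$ is only a \emph{dilatation} on horizontal vectors, not a Riemannian submersion (as stressed after Proposition~\ref{lift}). The conformal factor $\lambda$ appearing in $\pi^*\langle\,,\,\rangle_{FS} = \lambda^2\langle\,,\,\rangle$ must be tracked when comparing $\nabla^{FS}$ on $M$ with the intrinsic Levi-Civita connection of $\widetilde M$, but because $\lambda$ is constant along horizontal curves, its horizontal derivatives do not contribute, so the conformal factor affects only the vertical component of $D_{\widetilde X}\widetilde v$. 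This is precisely the component absorbed into the $\mathcal O(\widetilde X,\widetilde v)$ term of \eqref{fun}, which is harmless for our tangency argument. The hypothesis that $\widetilde v_{\widetilde p}$ is not a complex multiple of $\widetilde p$ guarantees $v_p = d\pi(\widetilde v_{\widetilde p}) \neq 0$, so that the conclusion $v_p \in \mathcal N^M$ is non-vacuous; without it, $\widetilde v_{\widetilde p}$ could be purely vertical and project to zero.
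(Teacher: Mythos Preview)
Your argument is correct and follows essentially the same route as the paper's proof: reduce to horizontal $\widetilde v_{\widetilde p}$ using $\mathcal V\subset\mathcal N^{\widetilde M}$, apply the O'Neill-type formula \eqref{fun}, and compare normal components relative to $\widetilde M$ to obtain $\alpha^{\widetilde M}(\widetilde X,\widetilde v)=\widetilde{\alpha^M(X,v)}$. The paper states this last identity explicitly, while you phrase it as a tangency statement, but the content is identical; your added remarks on the conformal factor $\lambda$ are a helpful elaboration of why \eqref{fun} holds, though not needed once \eqref{fun} is granted.
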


\begin{proof}  We can assume that $\widetilde{v}_{\widetilde{p}} \in T_{\widetilde{p}} \widetilde{M}$ is horizontal with respect to the submersion $\pi: \widetilde{M} \rightarrow M$. Let $\widetilde{v} \in \Gamma(T\widetilde{M})$ be a horizontal and projectable vector field that extends $v_p$. Let $\widetilde{X} \in \Gamma(\widetilde{M})$ be an arbitrary horizontal and projectable vector field defined around $\widetilde{p} \in \widetilde{M}$.
From equation (\ref{fun}) we get
\[(D_{\widetilde{X}}\widetilde{v})_{\widetilde{p}} = \widetilde{{\nabla^{FS}_Xv}} + {\mathcal{O}}(\widetilde{X},\widetilde{v})  .\]
So
\[ (\nabla^{\widetilde{M}}_{\widetilde{X}} \widetilde{v})_{\widetilde{p}} + \alpha^{\widetilde{M}}(\widetilde{X}, \widetilde{v}) =  \widetilde{{\nabla^{M}_Xv}} + \widetilde{\alpha^M(X,v)} + {\mathcal{O}}(\widetilde{X},\widetilde{v})  .\]
Taking normal and tangent components with respect to $\widetilde{M}$ we get  $\alpha^{\widetilde{M}}({\widetilde{X}}_{\widetilde{p}},\widetilde{v}_{\widetilde{p}}) = \widetilde{\alpha^M(v_p,X_p)}$. Thus, if $\widetilde{v}_{\widetilde{p}} \in \mathcal{N}^{\widetilde{M}}$, then $v_{p} \in \mathcal{N}^{M}$.
\end{proof}

\begin{lemma} \label{lem1} Assume that $M \subset \C P^n$ is full and its normal holonomy group does not act transitively on the normal space $\nu_p(M)$. Then the normal holonomy group of $\widetilde{M}$ does not act transitively on $\nu_{\widetilde{p}}(\widetilde{M})$, where $\pi(\widetilde{p}) = p$.
\end{lemma}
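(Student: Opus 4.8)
The plan is to relate the normal holonomy of the cone $\widetilde M=\pi^{-1}(M)$ directly to that of $M$ through the submersion $\pi$, using the O'Neill type formula of Proposition~\ref{lift} in the spirit of Lemma~\ref{lem2}, and then to show that passing to $\widetilde M$ enlarges the (restricted) normal holonomy only by the central circle generated by the complex structure — which cannot turn a non-transitive unitary group into a transitive one. First I would fix the identifications. Since $\widetilde M$ is a complex cone (invariant under $w\mapsto zw$, $z\in\C^\ast$), the conformal complex-linear scalings $w\mapsto zw$ carry $T_w\widetilde M$ and $\nu_w\widetilde M$ to the corresponding spaces over $zw$; hence tangent and normal spaces are constant along each fibre, the splitting $T_{\widetilde p}\widetilde M=\mathcal V_{\widetilde p}\oplus\mathcal H_{\widetilde p}$ identifies $\nu_{\widetilde p}\widetilde M$ with the horizontal lift $\widetilde{\nu_pM}$, and $d\pi$ gives a $J$-linear conformal isomorphism $\nu_{\widetilde p}\widetilde M\cong\nu_pM$; in particular the two normal spaces have the same complex dimension $k$.

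Next I would compare the two normal connections. Applying (\ref{fun}) to the horizontal lift of a normal field $\xi$ of $M$ and taking the component normal to $\widetilde M$ gives $\nabla^{\perp,\widetilde M}_{\widetilde X}\widetilde\xi=\widetilde{\nabla^{\perp,M}_X\xi}$ for every horizontal $\widetilde X$ (the Weingarten and $\mathcal O$-terms are tangent to $\widetilde M$). Thus normal parallel transport of $\widetilde M$ along a horizontal lift is the lift of normal parallel transport of $M$, and along a fibre it is trivial because $\nu\widetilde M$ is constant there. The only genuinely new contribution comes from the vertical component of the bracket $[\widetilde X,\widetilde Y]$, which by the Hopf geometry is governed by the Fubini--Study K\"ahler form and feeds the complex structure $J|_{\nu\widetilde M}=i\,\Id$ into the curvature: matching the two pieces yields $R^{\perp,\widetilde M}_{\widetilde X,\widetilde Y}=\widetilde{R^{\perp,M}_{X,Y}}+c\,\Omega_{FS}(X,Y)\,i\,\Id$, and $R^{\perp,\widetilde M}$ vanishes as soon as one argument is vertical. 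By Ambrose--Singer, and since $i\,\Id$ is central, this gives $\mathfrak{hol}^{\perp}_{\widetilde M}=\widetilde{\mathfrak{hol}^{\perp}_M}+\R\,(i\,\Id)$, that is, $\Phi^{\perp,0}_{\widetilde M}=U(1)\cdot\Phi^{\perp,0}_M$ under the identification above.

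Finally I would conclude. Both groups are unitary, $\Phi^{\perp,0}_M\subseteq U(k)$. If $\Phi^{\perp,0}_{\widetilde M}$ were transitive it would equal $U(k)$, whence $\mathfrak{su}(k)=[\mathfrak u(k),\mathfrak u(k)]=[\widetilde{\mathfrak{hol}^{\perp}_M}+\R\,(i\,\Id),\ \widetilde{\mathfrak{hol}^{\perp}_M}+\R\,(i\,\Id)]=\widetilde{[\mathfrak{hol}^{\perp}_M,\mathfrak{hol}^{\perp}_M]}\subseteq\widetilde{\mathfrak{hol}^{\perp}_M}$; then $\Phi^{\perp,0}_M\supseteq SU(k)$, which already acts transitively on the unit sphere of $\nu_pM$ (here $k\geq 2$, since a non-transitive unitary normal holonomy forces $k\geq 2$), contradicting the non-transitivity of $M$. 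Hence $\Phi^{\perp,0}_{\widetilde M}\neq U(k)$ and $\widetilde M$ is not transitive. Equivalently, a vector $\widetilde\xi^2$ lifted from the complex normal space of a holonomy orbit of $M$ stays orthogonal to the whole $\widetilde M$-orbit, because, being complex-orthogonal to $\widetilde\xi^1$, it is also orthogonal to the extra direction $i\,\widetilde\xi^1$.

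The main obstacle is the content of the second paragraph: making precise that lifting to the cone adds to the normal holonomy exactly the central scalar $i\,\Id$ and nothing more. This requires careful book-keeping of the constant dilation factor $\lambda$ along horizontal curves, of the vertical bracket term in (\ref{fun}), and of the triviality of normal transport along the Hopf fibre. Once the holonomy algebra is pinned down to $\widetilde{\mathfrak{hol}^{\perp}_M}+\R\,(i\,\Id)$, the transfer of non-transitivity is the purely algebraic step above.
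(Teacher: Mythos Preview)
Your proof is correct and follows the same overall strategy as the paper --- both identify the normal holonomy of $\widetilde M$ with that of $M$ up to the central circle $U(1)$ acting as $i\,\Id$ on the normal space --- but the implementations differ.

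The paper works at the level of parallel transport: it invokes the loop--decomposition lemma of \cite{O2} (valid because $\mathcal V\subset\mathcal N^{\widetilde M}$ kills $R^\perp$ in the vertical direction) to write $\tau^\perp_{\widetilde\gamma}=\tau^\perp_v\circ\tau^\perp_{\widetilde c}$ with $v$ vertical and $\widetilde c$ the horizontal lift of a loop $c$ in $M$. Since vertical transport is Euclidean translation and $\widetilde c$ ends at $e^{i\theta}\widetilde p$, one obtains directly that $\tau^\perp_{\widetilde\gamma}$ coincides, via $d\pi$, with $e^{-i\theta}\tau^\perp_c$; together with $e^{i\theta}\in\Phi^\perp_{\widetilde M}$ (from \cite{DS}) this shows the two holonomy groups actually \emph{identify}, a slightly stronger conclusion than you need. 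Your route is infinitesimal: you compute $R^{\perp,\widetilde M}_{\widetilde X,\widetilde Y}=\widetilde{R^{\perp,M}_{X,Y}}+c\,\Omega(X,Y)\,i\,\Id$ and use Ambrose--Singer to get the containment $\mathfrak{hol}^\perp_{\widetilde M}\subseteq\widetilde{\mathfrak{hol}^\perp_M}+\R(i\,\Id)$, then close with the clean commutator argument $\mathfrak{su}(k)=[\mathfrak u(k),\mathfrak u(k)]\subseteq\widetilde{\mathfrak{hol}^\perp_M}$. Your approach avoids the external \cite{O2} lemma at the cost of the curvature computation (which, as you note, is where the care is needed --- in particular one should check that the vertical part of $[\widetilde X,\widetilde Y]$ has no radial component, which follows from $\langle D_{\widetilde X}\widetilde Y,\widetilde p\rangle=-\langle\widetilde X,\widetilde Y\rangle$ being symmetric). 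Both arguments ultimately rest on the same two facts you isolate: horizontal normal transport lifts from $M$, and the only new holonomy generated vertically is scalar.
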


\begin{proof}  Let $\widetilde{R}^{\perp}$ be the curvature tensor of the normal connection of $\widetilde{M}$.
Notice that the Ricci equation implies $\widetilde{R}^{\perp}_{X \, , \, \cdot } = 0$ if $X \in
\mathcal{N}^{\widetilde{M}}$. So we can use the Lemma in the appendix of \cite{O2}. Namely, any normal parallel
transport $\tau^{\perp}_{\widetilde{\gamma}}$ along a loop $\widetilde{\gamma}(t)$ starting at $\widetilde{p}$
can be written as $\tau^{\perp}_{\widetilde{\gamma}} = \tau ^{\perp}_{v} \circ \tau ^{\perp}_{\widetilde{c}}$,
where $v$ is a vertical curve (i.e., $\frac{dv}{dt} \in \mathcal{N}^{\widetilde{M}}$) and $\widetilde{c}$ is a
horizontal, that is to say, $\frac{d \widetilde{c}}{dt} \in (\mathcal{N}^{\widetilde{M}})^{\perp}$. Notice that $\tau_{v}$
is just an Euclidean translation, i.e., $\tau_v( \xi_q ) = \xi_p$, where $\xi_p = \xi_q$ as vectors of
$\C^{n+1}$. Observe also that the horizontal curve $\widetilde{c}$ is the lift (starting at $\widetilde{p} \in
\widetilde{M}$) of a loop $c$ starting at $p \in M$. Let $\widetilde{\xi}_{\widetilde{p}} \in
\nu_{\widetilde{p}}(\widetilde{M})$ be a normal vector and let $\xi_p \in \nu_p(M)$ its projection to $M$. Let
$\xi(t)$ be the normal parallel transport along the loop $c$. Then by using equation (\ref{fun}) it is not
difficult to see that the horizontal lift $\widetilde{\xi}(t)$ is parallel with respect to the normal connection
along the curve $\widetilde{c}$.

Observe that $\widetilde{c}(1)$ belongs to the sphere of radius $||\widetilde{c}(0)||$, and $\pi (
\widetilde{c}(0)) = \pi (\widetilde{c}(1)) = p$. So, $\widetilde{c} (1) = e^{i\theta}$, for some $ \theta  \in
[0, 2\pi)$. Since the isometry $ x \mapsto e^{i \theta} x$ of $\widetilde{M}$ projects down to the identity of
$M$, one has that $e^{i\theta} \tau ^{\perp}_{\widetilde{c}} = e^{i \theta} \tau ^{\perp}_{\widetilde{\gamma}}$,
which coincides, via $d\pi_{|\widetilde p}$, with $\tau ^{\perp}_{c}$.  Since any  $e^{i \theta}$ belongs to the
normal holonomy group of $\widetilde{M}$ (recall that the normal holonomy acts as an $s$-representation; see
\cite[Remark 2.2]{DS}) we can conclude that  the normal holonomy groups of $M$ and $\widetilde{M}$ identify (via $d\pi_{|\widetilde
p}$). Thus, if one of them does not act transitively on the unit sphere neither does the other.
\end{proof}

\begin{remark}\label{orb} Notice that two orbit equivalent Hermitian $s$-representations are equivalent. Since the normal holonomy groups of $M$ and $\widetilde{M}$ act as Hermitian $s$-represen\-tations, the above proof shows that the holonomy representations are indeed equivalent. Roughly speaking, the holonomy groups of $M$ and $\widetilde{M}$ are equal.
\end{remark}





Now we are ready to prove Theorem~\ref{main} and therefore  Theorem~\ref{berger-cp}.  The main tool is Theorem~\ref{main-cx}.

\begin{proof} [Proof of  Theorem~\ref{main}]  Notice that Lemma \ref{lem1} allows us to apply Theorem~\ref{main-cx} to
$N = \widetilde{M}$. So we get that \[ \widetilde{M} = \bigcup_{{v\in (\nu _0 (K.q))_q}} (K.q)_v \, \, \, , \]
where $K$ is the isotropy group of a irreducible Hermitian symmetric space. Observe also that $\nu _0 (K.q)_q$  is contained in the nullity of the second fundamental form of the complex submanifold $N$ at $q$ (which is a complex subspace). Then Lemma~\ref{lem2} and Theorem~\ref{no-abe} in the Appendix imply that $\dim (\nu _0 (K.q)_q) = 1$, otherwise the nullity of the second fundamental form of $M$ would be not trivial. In fact any element in $\nu _0 (K.q)_q$ that is perpendicular to the position vector $q$ cannot be a complex multiple of $q$ (since $i \, q$ is tangent to $\widetilde M$). Since $\widetilde{M}$ is full we get that the unique fixed point of $K$ is the origin $0 \in \C^{n+1}$. So the leaves of the nullity distribution $\mathcal{N}^{\widetilde{M}}$ are just the complex lines given by the fibers of the submersion $\pi: \widetilde{M} \rightarrow M$.
Thus,  $K$ acts transitively on the complex submanifold $M \subset \C P^n$. Therefore, $M$ is a complex orbit of the projectivization of an irreducible Hermitian $s$-representation (cf. \cite{CD}).
\end{proof}

\section{Further comments}\label{further}

We now explain why the completeness assumption cannot be dropped either  in  Theorem \ref{berger-cp} or in Theorem \ref{berger-c} .\\
Let $M$ be a submanifold of Euclidean space and let $$ N =  \underset {v\in \nu _0 (M)_q}
{\bigcup} M_v$$ (defined locally around $M$, $v$ short enough). It is standard to show that the normal holonomy of $N$
at $q$ coincides with the semisimple part of the normal holonomy of $M$ at $q$. Let now $K$ act on
$\C^{n+1}$ as an irreducible Hermitian $s$-representation of rank $r$ and  let $v \in \C^{n+1}$ be such that
$K.v$  projects down to a complex orbit of $\C P^n$. Choose a short enough normal vector $\xi \neq 0$ to $K.v$ at
$v$ that it is perpendicular to $v$. Moreover, assume that the normal holonomy orbit of $\Phi ^ \perp  . \xi =
K_v.\xi$ is a complex submanifold of the projectivization of the  semisimple normal space $\nu _0 (K.v)^ \perp
= v^\perp \subset \nu (K.v) $. By [HO], the dimension over $M = K.(v+\xi)$ of $\nu_0 (K.(v+\xi))$ is $2$.
Moreover,  the semisimple part of the normal holonomy representation of  $K .  (v+\xi))$ at $v+\xi$ has rank $r
-2$. From the above choice of $v$ and $\xi$,  it is not hard to see that this semisimple part of the normal
holonomy representation is of Hermitian type.  Defining $N$ like at the beginning of this discussion one has that $N$ is
a complex submanifold of  $\C^{n+1}$ with not transitive irreducible normal holonomy, if $r \geq 4$. Moreover, $N$ projects down to the projective space $\C P^n$ as a complex submanifold $\bar N$ with non transitive
holonomy (see Lemma \ref{lem1} and its proof). Notice however that $\bar N$ cannot be extended to a complete complex
submanifold. Indeed,  the  second fundamental form has nullity on an open set  and so $\bar N$ cannot be
homogeneous as it would follow from  Theorem \ref{berger-c} (see Theorem \ref{no-abe} in the Appendix). This shows that the assumption of completeness cannot be dropped.

\medskip

We would like also remark that our main results are far from being true for (non necessarily complex) submanifolds of Euclidean space $\RR^n$. For example, a submanifold with flat normal bundle is not necessarily homogeneous. Anyway, it is an open problem if compact homogeneous submanifolds  whose normal holonomy is not transitive on the unit sphere of the normal space, are orbits of $s$-representations (cf. \cite[Conjecture 6.2.14, page 198]{CDO}).

\section*{Appendix}\label{appendix}

The aim of this appendix is to prove the following

\begin{theorem}\label{no-abe}
Let $M$ be a complete full complex submanifold of $\C P^n$ whose normal holonomy is not transitive, then the second fundamental form has no nullity in an open subset of $M$.
\end{theorem}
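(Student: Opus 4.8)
The plan is to argue by contradiction, extracting the contradiction from completeness. Assume $\mathcal{N}^M\neq 0$ on an open set and pass to the lift $\widetilde{M}\subset\C^{n+1}$. By Lemma~\ref{lem1} its normal holonomy is again non-transitive, by \cite{DS} it is irreducible, and fullness of $M$ gives fullness of $\widetilde{M}$. Hence Theorem~\ref{main-cx} applies to $N=\widetilde{M}$: locally around a generic $q$ one has $\widetilde{M}=\bigcup_{v\in(\nu_0(K.q))_q}(K.q)_v$, with $K$ an irreducible Hermitian $s$-representation whose unique fixed point, by fullness, is the origin. Since $(\nu_0(K.q))_q$ contains the position vector $q$, the nullity leaf $q+(\nu_0(K.q))_q$ is the complex-linear subspace $V:=(\nu_0(K.q))_q\subseteq\mathcal{N}^{\widetilde{M}}_q$ through $0$, and it contains the fibre line $\C q$.

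First I would convert the hypothesis into $\dim_\C V\ge 2$: by the identity $\alpha^{\widetilde{M}}(\widetilde{X},\widetilde{v})=\widetilde{\alpha^M(v,X)}$ from the proof of Lemma~\ref{lem2}, a horizontal vector lies in $\mathcal{N}^{\widetilde{M}}$ iff its projection lies in $\mathcal{N}^M$, so $\mathcal{N}^M\neq 0$ produces a horizontal $\widetilde{v}\in V$ with $\widetilde{v}\perp q$ that is not a complex multiple of $q$. Now completeness enters. Since $M$ is a complete immersed submanifold of the compact $\C P^n$, $\widetilde{M}=\pi^{-1}(M)$ is smooth away from the vertex $0$. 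On the other hand, moving in the horizontal direction $\widetilde{v}$ the parallel manifolds $(K.q)_{t\widetilde{v}}$ must meet a focal orbit at some finite $t_0\neq 0$ (the shape operator $A^{K.q}_{\widetilde{v}}$ cannot vanish identically, for then $\widetilde{v}$ would span a $\bar\nabla$-parallel, $A$-invariant line and force $\widetilde{M}$ to be reducible against its irreducibility). As $\widetilde{v}\perp q$, every point of this focal orbit has norm $\sqrt{\|q\|^2+t_0^2\|\widetilde{v}\|^2}>0$, so the focalization occurs at strictly positive radius, away from the vertex, where $\widetilde{M}$ and hence $M=\pi(\widetilde{M})$ is singular. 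This contradicts the smoothness of the complete $M$, and the only escape is $\dim_\C V=1$, i.e. $\mathcal{N}^M=0$ on an open dense set. The mechanism is that the harmless radial focalization collapses only to the excised, projectively invisible vertex, whereas any extra horizontal nullity would collapse to a genuine singular point of $M$.

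I expect the main obstacle to be precisely this last step: showing the horizontal focalization is genuine and occurs at positive radius, i.e. that no horizontal parallel normal direction of $K.q$ is a flat cylinder axis with $A^{K.q}_{\widetilde{v}}\equiv 0$. A cleaner route, which isolates why the positivity of the ambient curvature is essential (so that the incomplete examples of Section~\ref{further} escape), is a Riccati comparison: by completeness the leaves of $\mathcal{N}^M$ are complete totally geodesic $\C P^\mu\subset\C P^n$ with $\mu\ge 1$, and along a complete geodesic in such a leaf the conullity (splitting) tensor on the $J$-invariant transverse distribution $\mathcal{H}=(\mathcal{N}^M)^\perp\cap TM\neq 0$ (nonzero by fullness) obeys $\tfrac{D}{dt}C+C^2+\bar R=0$, with $\bar R$ built from the holomorphic sectional curvature of $\C P^n$ and hence positive definite; Riccati comparison then forces $C$ to blow up in finite time, producing a focal point along a complete compact leaf and again contradicting completeness of $M$. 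Either way the statement reduces to the absence of focal points along complete relative-nullity leaves in the positively curved $\C P^n$, which is the genuine content here.
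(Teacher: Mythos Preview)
Your first approach is exactly the paper's strategy, and you have put your finger on the real difficulty: why must a focal point of the family $(K.q)_{t\eta}$, lying at positive radius, be a \emph{singular} point of $\widetilde M$? Merely knowing that the orbit dimension drops there is not enough---a union of $K$-orbits can perfectly well be smooth across a drop in orbit type. The paper supplies the missing mechanism. Pick $\eta\in(\nu_0(K.q))_q$ not a real multiple of $q$ and perturb by a small multiple of $q$ so that every eigenvalue $\lambda_i$ of $A^{K.q}_\eta$ is nonzero. The isotropy groups $K_i:=K_{q+\lambda_i^{-1}\eta}$ act transitively on the curvature spheres $S_i(q)\subset K.q$, so together they generate a subgroup $\bar K\subset K$ transitive on $K.q$. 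Now $T_{p_i}\widetilde M=T_q\widetilde M$ because tangent spaces are constant along nullity lines; hence if $\widetilde M$ were smooth (and $K$-invariant) at each focal point $p_i=q+\lambda_i^{-1}\eta$, every $K_i$---and therefore $\bar K$---would preserve the fixed subspace $T_q\widetilde M$. Transitivity of $\bar K$ on $K.q$ then forces $T_x\widetilde M=T_q\widetilde M$ for every $x\in\widetilde M$, contradicting fullness. Thus $\widetilde M$ must fail to be smooth at some $p_i\neq 0$, and this is what projects to a singularity of $M$.

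A few smaller slips: $V=(\nu_0(K.q))_q$ is not a complex subspace (it contains $\RR q$, but $iq\in T_q(K.q)$), so ``$\C q\subset V$'' and ``$\dim_\C V\ge 2$'' are misstatements; and Theorem~\ref{main-cx} gives only $V\subseteq\mathcal N^{\widetilde M}_q$, so a horizontal $\widetilde v\in\mathcal N^{\widetilde M}_q$ need not lie in $V$---the paper sidesteps this by arguing directly from $\dim_\RR V\ge 2$. The correct reason that $A^{K.q}_{\widetilde v}\not\equiv 0$ is simply that $K$ is irreducible, hence $K.q$ is full. Your Riccati alternative is a genuinely different route: it is precisely the Abe--Magid argument the paper cites, which uses only the positive holomorphic curvature of $\C P^n$ and needs neither the normal-holonomy hypothesis nor Theorem~\ref{main-cx}.
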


Before giving the proof, we note that the above result is also a consequence of the fact  that complete complex submanifolds of the projective space have no nullity at some open subset (see \cite[Theorem 3]{AbMa}). We include a proof for the sake of being self contained.

\begin{proof}
From Lemma~\ref{lem1} and Remark~\ref{orb}, we have that the normal holonomy groups of $M$ and $\tilde M$ are equal. Then we can apply Theorem~\ref{main-cx} to $\tilde M$. We will show that
there are other singular points different from $0$, if $K.q$ is not most singular (in the hypothesis and
description of the above theorem, and $0$ is the fixed point of $K$). Assume $M$ to be complete and  let $q\in M$
and let $\eta \in (\nu _0 K.q)_{q}$, not a multiple of the position vector $\overrightarrow {q}$. and identify
$\eta$ with a parallel normal field to $K.q$.  The shape $A^{K.q}_\eta$ has constant eigenvalues $\lambda _1,
\cdots , \lambda _g$. Let $E_1, \cdots , E_g$ be the associated eigendistributions on $K.q$. We may assume that
all the eigenvalues are different from $0$, by adding to $\eta$ a small multiple of the position vector. The
isotropy subgroup $K_i : = K_{q + \lambda _i^{-1}\eta}$, which is bigger that $K_q$, must act transitively on
the integral manifold $S_i(q) \subset K.q$ through $q$ of the eigendistribution $E_i$, $i=1, \cdots , g$.  So,
the subgroup $\bar K$ of $K$ generated by the isotropy subgroups $K_1 , \cdots , K_g$ acts transitively on
$K.q$. But, from the description of Theorem~\ref{main-cx}, one has that $T_{q + t\eta}M = T_qM$ and so $K_iT_qM =
T_qM$, since isotropy subgroups preserve tangent spaces of invariant submanifolds. But the tangent space of $M$
do not change if one moves along $(\nu _0M)_q$. Then
$$\underset {x\in M} {\bigcup} T_xM = \bar K T_qM = T_qM$$

A contradiction.

Then $M$ is not smooth at some $q + \lambda _i^{-1}\eta \neq 0$. This singularity projects down to the
projective space. Thus $M$ would be not complete.
\end{proof}

 \bibliographystyle{amsplain}

\end{document}